\documentclass[11pt,twoside,reqno]{amsart}

\usepackage[margin=1in]{geometry} 
\usepackage{amsmath,amsthm,amssymb,bm,mathabx}
\usepackage{color} 
\usepackage{mathtools,mathptmx}
\usepackage{indentfirst}

\usepackage{amsbsy} 
\usepackage{amsfonts}
\usepackage{graphicx}
\usepackage{tkz-euclide}

\usepackage{array}
\usepackage{geometry}
\usepackage{cases}

\usepackage{hyperref}
\hypersetup{colorlinks=true, allcolors=blue}

\usepackage[square, numbers, comma, sort&compress]{natbib} 
\usepackage[all]{xy}
\usepackage{blkarray}
\usepackage{datetime}
\usepackage{enumitem} 
\usepackage{relsize}
\usepackage[super]{nth}
\usepackage{imakeidx}
\usepackage[normalem]{ulem}
\usepackage{cancel}




\newtheorem{theorem}{Theorem}[section]
\newtheorem{proposition}[theorem]{Proposition}
\newtheorem{corollary}[theorem]{Corollary}
\newtheorem{lemma}[theorem]{Lemma}
\newtheorem{remark}[theorem]{Remark}

\theoremstyle{definition}

\numberwithin{equation}{section}
\numberwithin{theorem}{section}

\newcommand{\dis}{\displaystyle}
\newcommand{\R}{\mathbb{R}}
\newcommand{\C}{\mathbb{C}}
\newcommand{\Z}{\mathbb{Z}}
\newcommand{\N}{\mathbb{N}}

\newcommand{\pochhammer}[2][n]{\left(#2\right)_{#1}}

\newcommand{\Hypergeometric}[5][x]{{}_{#2} F_{#3} \left( \left.\begin{matrix} #4 \\ #5 \end{matrix} \, \right| \, #1\right)}

\newcommand{\floor}[1]{\left\lfloor #1 \right\rfloor}
\newcommand{\ceil}[1]{\left\lceil #1 \right\rceil}
\newcommand{\seq}[2][n\in\N]{\left(#2\right)_{#1}}

\newcommand{\e}{\mathrm{e}}

\newcommand{\Functional}[2]{#1\left[#2\right]}

\newcommand{\StieltjesRogersPoly}[3][r]{S_{#2}^{\,(#1)}\left(#3\right)}
\newcommand{\modifiedStieltjesRogersPoly}[4][r]{S_{#2}^{\,(#1;\,#3)}\left(#4\right)}
\newcommand{\generalisedStieltjesRogersPolyTypeJ}[5][r]{S_{#2,#3}^{\,(#1;\,#4)}\left(#5\right)}

\setlist[itemize]{leftmargin=*,topsep=6pt,partopsep=3pt,itemsep=6pt}
\setlist[enumerate]{leftmargin=*,topsep=6pt,partopsep=3pt,itemsep=6pt,label=(\roman*)}

\title[Bidiagonal matrix factorisations related to multiple orthogonal polynomials]
{Bidiagonal matrix factorisations related \\ to multiple orthogonal polynomials}
		

\setcounter{tocdepth}{1} 
 
\author[Branquinho]{Am\'ilcar Branquinho$^{1}$}
\address{$^1$CMUC, Departamento de Matemática,
 Universidade de Coimbra, 3001-454 Coimbra, Portugal}
\email{$^1$ajplb@mat.uc.pt}

\author[Díaz]{Juan E.F. D\'iaz$^{2}$}
\address{$^{2,3,4}$CIDMA, Departamento de Matemática, Universidade de Aveiro, 3810-193 Aveiro, Portugal}
\email{$^2$juan.enri@ua.pt}

\author[Foulquié]{Ana Foulqui\'e-Moreno$^{3}$}
\email{$^3$foulquie@ua.pt}

\author[Lima]{H\'elder Lima$^{4}$}
\email{$^4$helder.lima@ua.pt}

\author[Mañas]{Manuel Ma\~nas$^{5}$}
\address{$^5$Departamento de Física Teórica, Universidad Complutense de Madrid, Plaza Ciencias 1, 28040-Madrid, Spain 
}
\email{$^5$manuel.manas@ucm.es}

\keywords{
Multiple orthogonal polynomials, 
Hessenberg recurrence matrix, 
bidiagonal matrices, 
Gauss-Borel factorisation, 
branched continued fractions, 
Jacobi-Pi\~neiro polynomials.}

\subjclass{Primary: 15A23, 33C45, 42C05, 47B36.
Secondary: 11J70, 33C20.}

\begin{document}

\begin{abstract}
We provide necessary and sufficient conditions for the Hessenberg recurrence matrix associated with a system of multiple orthogonal polynomials to admit a factorisation as a product of bidiagonal matrices. 
Using the Gauss-Borel factorisation of the moment matrix, we show that the nontrivial entries of those bidiagonal matrices can be expressed in terms of coefficients of type I or type II multiple orthogonal polynomials on the step-line with respect to the original system and its Christoffel transformations. 
Using the connection of multiple orthogonal polynomials with branched continued fractions, we show that the nontrivial entries of the bidiagonal matrices in the factorisation of the Hessenberg recurrence matrix correspond to the coefficients of a branched continued fraction associated with the given system of multiple orthogonal polynomials.
As a case study, we present an explicit bidiagonal factorisation for the Hessenberg recurrence matrices of the Jacobi-Pi\~neiro polynomials and, as a limiting case, the multiple Laguerre polynomials of first kind.
\end{abstract}

\maketitle

\tableofcontents

\section{Introduction and motivation}
The focal point of this paper are factorisations 
\begin{align}
\label{bidiagonal matrix factorisation L_1...L_r U introduction}
\mathrm{H}=\mathrm{L}_1\cdots\mathrm{L}_r\,\mathrm{U},
\end{align}
of banded Hessenberg recurrence matrices of systems of multiple orthogonal polynomials as products of $r$ lower-bidiagonal matrices $\mathrm{L}_1,\cdots,\mathrm{L}_r$, and one upper-bidiagonal matrix $\mathrm{U}$ of the form
\begin{align}
\label{bidiagonal matrices U and L_k}
\mathrm{L}_i=
\begin{bmatrix} 
1\\
\alpha_i & 1 \\
& \alpha_{r+1+i} & 1 \\
& & \ddots & \ddots
\end{bmatrix},
\; 1\leq i\leq r,
\quad \text{and} \quad
\mathrm{U}=
\begin{bmatrix} 
\alpha_0 & 1 \\
& \alpha_{r+1} & 1 \\
& & \alpha_{2(r+1)} & 1 \\ 
& & & \ddots & \ddots
\end{bmatrix},
\end{align}
whose nontrivial entries $\seq[m\in\N]{\alpha_m}$ are all different from zero.

When these entries are all positive, the Hessenberg matrix  $\mathrm{H}=\mathrm{L}_1\cdots\mathrm{L}_r\,\mathrm{U}$ is oscillatory. 
Spectral properties of oscillatory matrices lead to a Favard-type theorem for Hessenberg matrices that admit a positive bidiagonal factorisation, in terms of a system of multiple orthogonal polynomials with respect to positive measures on $\R^+$. For these systems, the zeros of the type II polynomials on the step-line are all simple, real, and positive, and the zeros of consecutive polynomials interlace (see \cite[Thm.~6]{AnaAmilcarManuel-Osc.BandedHessenberg...} and \cite[Thm.~4.8]{MOPassociatedwithBCFforRatiosOfHypergeometricSeries}).

The study of these positive bidiagonal factorisations dates back to \cite{GeneticSumsAptekarevEtAl}, where high-order nonsymmetric difference operators are considered. The bidiagonal factorisation arises in connection with the vector Stieltjes continued fraction and serves as a tool for analysing both the direct and inverse spectral problems associated with the operator. It is also applied to the integration of the discrete KdV equation hierarchy. 

The Hessenberg matrix arising from the recurrence relations of multiple orthogonal polynomials can be interpreted as the transition probability matrix of a Markov chain that extends beyond the classical birth-and-death framework. 
These positive bidiagonal factorisations correspond to Markov chains whose stochastic matrices can be decomposed into products of bidiagonal stochastic matrices, each representing pure birth-and-death Markov chains (see \cite{Jacobi-PineiroMarkovChains,AnaAmilcarManuel-Osc.BandedHessenberg...}). 
An example of Markov chains with a physical motivation are urn models.
Some urn models have been solved in terms of classical orthogonal polynomials and, more recently, a new urn model based on the bidiagonal factorisation of the Jacobi Pi\~neiro multiple orthogonal polynomials with respect to two measures was proposed in \cite{UrnModelJacobi-Pineiro}.

In the literature we find several examples of bidiagonal factorisations for the Hessenberg recurrence matrices of multiple orthogonal polynomials with respect to systems of $2$ measures.
In particular, in \cite[\S 8]{GeneticSumsAptekarevEtAl}, we can find this bidiagonal factorisation for the Jacobi-Pi\~neiro polynomials, and, in \cite{BidiagFactHahnMOP}, we can find it for the Hahn multiple orthogonal polynomials and their descendants in the multiple Askey scheme, including the Jacobi-Pi\~neiro polynomials, multiple Meixner (of the first and second kind), multiple Laguerre (of the first and second kind), multiple Kravchuk, and multiple Charlier. 
Nevertheless, the bidiagonal factorisation of the Hahn multiple orthogonal polynomials was obtained only after substantial computational effort and required an independent proof, as no general algorithm was available. 
Furthermore, in \cite{AnaAmilcarManuel-Bidiag.Fact.andDarboux} is obtained a bidiagonal factorisation in terms of the values of the type I and II multiple orthogonal polynomials evaluated at the origin, corresponding to Darboux transformations of the Hessenberg matrix, for the Jacobi-Pi\~neiro and hypergeometric multiple orthogonal polynomials, again with respect to $2$ measures.

Bidiagonal factorisations of the Hessenberg recurrence matrices associated with multiple orthogonal polynomials with respect to systems of any number of measures whose moment generating functions are hypergeometric were obtained in \cite{MOPassociatedwithBCFforRatiosOfHypergeometricSeries}, using a connection of those polynomials with branched continued fractions.
That paper provides an explicit example of a new family of multiple orthogonal polynomials, for which the location, simplicity, and interlacing of the zeros were proved using the bidiagonal factorisation of the corresponding Hessenberg recurrence matrix and there is no other known method to prove these properties without imposing significantly stronger conditions on the parameters involved.

The main goals of this paper are to develop methods to obtain bidiagonal factorisations of the form \eqref{bidiagonal matrix factorisation L_1...L_r U introduction} for the Hessenberg recurrence matrices of systems of multiple orthogonal polynomials and to establish necessary and sufficient conditions for the existence of those factorisations.
%
In Section \ref{Factorisations using MOP coefficients}, we give a sufficient condition for the existence of the bidiagonal factorisation. 
In this approach, the nontrivial entries $\seq[n\in\N]{\alpha_n}$ in the bidiagonal matrices are described in terms of the coefficients of the type I or type II polynomials along the step-line, for both the original system and its Christoffel transformations. 
In Section \ref{Factorisations linked to BCF}, we obtain the same sequence $\seq[n\in\N]{\alpha_n}$  using the connection between multiple orthogonal polynomials, production matrices and branched continued fractions as introduced in \cite{AlanSokal.MOP-ProdMat-BCF}. This framework allows us to establish necessary and sufficient conditions for the existence of the bidiagonal factorisation.
Finally, in Section \ref{Jacobi-Pineiro section}, we present a case study: we construct the explicit bidiagonal factorisation of the Hessenberg recurrence matrix of the Jacobi-Pi\~neiro polynomials with an arbitrary number of weights. As a limiting case, we also derive the corresponding factorisation for the multiple Laguerre polynomials of first kind.

\section{Background}

\subsection{Multiple orthogonal polynomials}
\hspace*{\fill}

Multiple orthogonal polynomials (see \cite[\S~4]{NikishinSorokinBook}, \cite[\S~23]{IsmailBook}, and \cite[\S 3]{WalterSurveyAIMS2018}) are a generalisation of orthogonal polynomials satisfying orthogonality conditions with respect to systems of several measures rather than just one measure.
In this section, we briefly revisit the theory of multiple orthogonal polynomials, defining the orthogonality conditions via linear functionals in $\C[x]$, the ring of polynomials in one variable with complex coefficients.
For more generality, $\C$ could be replaced by a generic field in all definitions and theoretical results. 
That generality is not relevant for this work, but it may be of interest to some readers.

A linear functional $u$ defined on $\C[x]$ is a linear map $u:\C[x]\to\C$.
The action of a linear functional $u $ on a polynomial $f\in\C[x]$ is denoted by $\Functional{u}{f}$.
The moment of order $n\in\N$ of a linear functional $u $ is equal to $\Functional{u}{x^n}$.
By linearity, every linear functional $u $ is uniquely determined by its moments. 
In most relevant examples, there exists a measure $\mu$ supported on a subset of the real line or on a curve in the complex plane such that $\dis\Functional{u}{f}=\int f(x)\mathrm{d}\mu(x)$ for any $f\in\C[x]$ and we say that the functional $u$ is induced by the measure $\mu$.
However, the use of functionals allows us to not have to concern about the existence of measures in most theoretical results (and it also provides a shorter notation).

There are two types of multiple orthogonal polynomials, type I and type II, with respect to systems of $r$ linear functionals, for some positive integer $r$.
Both types are labelled by multi-indices $\vec{n}=\left(n_1,\cdots,n_r\right)\in\N^r$ of norm $|\vec{n}|=n_1+\cdots+n_r$,
and both reduce to conventional orthogonal polynomials when $r=1$.

The type II multiple orthogonal polynomial with respect to a system of $r$ linear functionals $\left(v_1,\cdots,v_r\right)$ for the multi-index $\vec{n}=\left(n_1,\cdots,n_r\right)\in\N^r$ is a monic polynomial $P_{\vec{n}}(x)$ of degree $|\vec{n}|$ which satisfies the orthogonality conditions
\begin{align}
\label{type II orthogonality conditions}
\Functional{v_j}{x^kP_{\vec{n}}(x)}=0
\quad\text{if}\quad
0\leq k\leq n_j-1,
\quad\text{for each}\quad 
1\leq j\leq r.
\end{align}

The type I multiple orthogonal polynomials with respect to a system of $r$ linear functionals $\left(v_1,\cdots,v_r\right)$ for the multi-index $\vec{n}=\left(n_1,\cdots,n_r\right)\in\N^r$ are given by a vector of $r$ polynomials $\left(A_{\vec{n},1}(x),\cdots,A_{\vec{n},r}(x)\right)$ such that $\deg\left(A_{\vec{n},j}(x)\right)\leq n_j-1$ for all $1\leq j\leq r$ and the type I linear form $q_{\vec{n}}$, defined by
\begin{align}
\label{type I linear form definition}
q_{\vec{n}}=\sum_{j=1}^{r}A_{\vec{n},j}(x)\,v_j,
\end{align}
satisfies the orthogonality and normalisation conditions
\begin{align}
\label{type I orthogonality conditions}
\Functional{q_{\vec{n}}}{x^m}
=\sum_{j=1}^{r}\Functional{v_j}{x^m\,A_{\vec{n},j}(x)}
=\begin{cases}
0 & \text{ if } 0\leq m\leq|\vec{n}|-2, \\
1 & \text{ if } m=|\vec{n}|-1.
\end{cases}
\end{align}

In most relevant examples, the functionals $v_1,\cdots,v_r$ are induced by measures $\mu_1,\cdots,\mu_r$ which are all absolutely continuous with respect to a common measure $\mu$, 
i.e., there exist weight functions $w_1(x),\cdots,w_r(x)$ defined in the support of $\mu$ such that 
$\dis\Functional{v_j}{f}=\int f(x)w_j(x)\mathrm{d}\mu(x)$ for any $f\in\C[x]$ and any $1\leq j\leq r$. 
Then, the type I function $Q_{\vec{n}}(x)$ is defined by $\dis Q_{\vec{n}}(x)=\sum_{j=1}^{r}A_{\vec{n},j}(x)\,w_j(x)$ and 
the type I linear form $q_{\vec{n}}$ defined in \eqref{type I linear form definition} has the integral representation
$\dis\Functional{q_{\vec{n}}}{f}=\int f(x)Q_{\vec{n}}(x)\mathrm{d}\mu(x)$ for any $f\in\C[x]$.

The multiple orthogonality conditions in \eqref{type II orthogonality conditions} and \eqref{type I orthogonality conditions} give two systems of $|\vec{n}|$ equations for the $|\vec{n}|$ coefficients of the polynomials.
The matrices of these systems are the transpose of each other, so the existence and uniqueness of type I and type II multiple orthogonal polynomials are equivalent to each other.

We focus on the multi-indices on the step-line:
$\vec{n}=\left(n_1,\cdots,n_r\right)\in\N^r$ such that $n_1\geq n_2\geq\cdots\geq n_r\geq n_1-1$.
For $n\in\N$, there is a unique multi-index of norm $n$ on the step-line of $\N^r$.
If $n=rk+j$, with $k\in\N$ and $0\leq j\leq r-1$, that multi-index is
formed by $j$ entries equal to $k+1$ followed by $r-j$ entries equal to $k$.

The polynomial sequence $\seq[n\in\N]{P_n(x)}$ formed by the type II multiple orthogonal polynomials on the step-line of $\N^r$ is also said to be $r$-orthogonal.
A polynomial sequence $\seq[n\in\N]{P_n(x)}$ is $r$-orthogonal with respect to a system $\left(v_1,\cdots,v_r\right)$ if
\begin{align}
\label{type II orthogonality conditions on the step-line}
\Functional{v_j}{x^k\,P_n}
=\begin{cases}
N_n\neq 0 &\text{ if } n=rk+j-1 \\
  \hfil 0 &\text{ if } n\geq rk+j
\end{cases}
\quad\text{for each }j\in\{1,\cdots,r\}.
\end{align}

Similarly, we consider the sequences of type I multiple orthogonal polynomials $\big(A_{n,1}(x),\cdots,A_{n,r}(x)\big)_{n\in\N}$ and type I linear forms $\seq[n\in\N]{q_n}$ on the step-line.
However, for any system $\left(v_1,\cdots,v_r\right)$, the conditions on the degrees of the type I multiple orthogonal polynomials for the multi-index $\vec{0}=\left(0,\cdots,0\right)\in\N^r$ imply that
$A_{\vec{0},j}(x)=0$ for all $1\leq j\leq r$ and, consequently, $q_{\vec{0}}=0$.

Therefore, we use the notation $A_{n,j}(x)$ and $q_n$, with $n\in\N$ and $1\leq j\leq r$, for the type I multiple orthogonal polynomials and type I linear forms for the multi-index on the step-line with norm $n+1$, so that the sequences $\big(A_{n,1}(x),\cdots,A_{n,r}(x)\big)_{n\in\N}$ and $\seq[n\in\N]{q_n}$ do not start with trivial zero elements.
Using this notation:
\begin{itemize}
\item 
the type I orthogonality and normalization conditions in \eqref{type I orthogonality conditions} on the step-line become
\begin{align}
\label{type I orthogonality conditions on the step-line}
\Functional{q_n}{x^m}
=\sum_{j=1}^{r}\Functional{v_j}{x^m\,A_{n,j}(x)}
=\begin{cases}
0 & \text{ if } 0\leq m\leq n-1, \\
1 & \text{ if } m=n,
\end{cases}
\end{align}

\item 
the sequence of type I linear forms $\seq[k\in\N]{q_k}$ on the step-line is the dual sequence of the $r$-orthogonal polynomial sequence $\seq[n\in\N]{P_n(x)}$ with respect to the same system, i.e.,
\begin{align}
\label{biorthogonality conditions}
\Functional{q_k}{P_n(x)}
=\delta_{k,n}
=\begin{cases}
1 & \text{ if } n=k, \\
0 & \text{ if } n\neq k.
\end{cases}
\end{align}
This duality corresponds to the biorthogonality conditions in \cite[Thm.~23.1.6]{IsmailBook} reduced to the step-line.
\end{itemize}

A monic polynomial sequence $\seq[n\in\N]{P_n(x)}$ is $r$-orthogonal if and only if it satisfies a recurrence relation of order $r+1$ of the form
\begin{align}
\label{recurrence relation r-OP}
P_{n+1}(x)=x\,P_n(x)-\sum_{k=0}^{\min(r,n)}\gamma_{n-k}^{\,[k]}\,P_{n-k}(x),
\end{align}
with $\gamma_n^{\,[r]}\neq 0$ for all $n\in\N$ and the trivial initial condition $P_0(x)=1$.
The recurrence relation \eqref{recurrence relation r-OP} satisfied by a $r$-orthogonal polynomial sequence $\seq[n\in\N]{P_n(x)}$ can be expressed as a vector recurrence relation 
\begin{align}
x\,\left[P_n(x)\right]_{n\in\N}=\mathrm{H}\,\left[P_n(x)\right]_{n\in\N},
\end{align}
where $\left[P_n(x)\right]_{n\in\N}=\big[P_0(x),P_1(x),P_2(x),\cdots\big]^\top$ and the Hessenberg recurrence matrix is
\begin{align}
\label{Hessenberg matrix for r-OP}
\mathrm{H}
=\begin{bmatrix} 
\gamma^{[0]}_0 & 1 
\vspace*{0,1 cm}\\
\gamma^{[1]}_0 & \gamma^{[0]}_1 & 1 
\vspace*{0,1 cm}\\
\vdots & \ddots & \ddots & \ddots 
\vspace*{0,1 cm}\\ 
\gamma^{[r]}_0 & \cdots & \gamma^{[1]}_{r-1} & \gamma^{[0]}_r & 1 
\\
& \ddots & & \ddots & \ddots
\end{bmatrix}.
\end{align}

Furthermore, for any $n\geq 1$, $P_n(x)$ is the characteristic polynomial of the truncated $n$-by-$n$ Hessenberg matrix $\mathrm{H}_n$ formed by the first $n$ rows and columns of $\mathrm{H}$.
As a result, the study of the spectral properties of the matrices $\mathrm{H}_n$ gives information about the zeros of the $r$-orthogonal polynomials $P_n(x)$.

We are interested in bidiagonal matrix factorisations of the form $\mathrm{H}=\mathrm{L}_1\cdots\mathrm{L}_r\,\mathrm{U}$ as in \eqref{bidiagonal matrix factorisation L_1...L_r U introduction} for the Hessenberg matrix $\mathrm{H}$ in \eqref{Hessenberg matrix for r-OP}.
Observe that these bidiagonal matrix factorisations can be used to explicitly write the coefficients of the recurrence relation \eqref{recurrence relation r-OP} satisfied by $\seq[n\in\N]{P_n(x)}$ as sums of products of the nontrivial entries of the bidiagonal matrices $\mathrm{L}_1,\cdots,\mathrm{L}_r$, and $\mathrm{U}$ in \eqref{bidiagonal matrices U and L_k}:
\begin{align}
\label{recurrence coefficients as a combination of BCF coefficients}
\gamma_n^{\,[k]}=\sum_{r\geq\ell_0>\cdots>\ell_k\geq 0}\,\prod_{i=0}^{k}\alpha_{(r+1)(n+i)+\ell_i-r}
\quad\text{for any }n\in\N\text{ and }0\leq k\leq r,
\end{align}
with $\alpha_m=0$ for any $m<0$ (see \cite[Thm.~4.5]{MOPassociatedwithBCFforRatiosOfHypergeometricSeries}).

\subsection{Gauss-Borel factorisation of the moment matrix and multiple orthogonality}
\label{Gauss-Borel factorisation and multiple orthogonality}
\hspace*{\fill}

The moment matrix of a system of linear functionals $\left(v_1,\cdots,v_r\right)$ is the infinite matrix $\mathrm{M}=\left(m_{n,\ell}\right)_{n,\ell\in\N}$ with entries
$m_{n,rk+j}=\Functional{v_{j+1}}{x^{k+n}}$ for all $k,n\in\N$ and $0\leq j\leq r-1$.
More explicitly,
\begin{align}
\label{moment matrix explicit}
\mathrm{M}=
\begin{bmatrix}
 \Functional{v_1}{1} & \cdots & \Functional{v_r}{1} & 
 \Functional{v_1}{x} & \cdots & \Functional{v_r}{x} & 
\Functional{v_1}{x^2} & \cdots & \Functional{v_r}{x^2} & \cdots 
\vspace*{0,1 cm}\\ 
 \Functional{v_1}{x} & \cdots & \Functional{v_r}{x} & 
\Functional{v_1}{x^2} & \cdots & \Functional{v_r}{x^2} & 
\Functional{v_1}{x^3} & \cdots & \Functional{v_r}{x^3} & \cdots 
\vspace*{0,1 cm} \\
\Functional{v_1}{x^2} & \cdots & \Functional{v_r}{x^2} & 
\Functional{v_1}{x^3} & \cdots & \Functional{v_r}{x^3} & 
\Functional{v_1}{x^4} & \cdots & \Functional{v_r}{x^4} & \cdots 
\\ 
\vdots & & \vdots & \vdots & & \vdots & \vdots & & \vdots &
\end{bmatrix}.
\end{align}

The Gauss-Borel factorisation of the moment matrix $\mathrm{M}$ is the LU-decomposition $\mathrm{M}=\mathrm{C}\mathrm{D}$, where $\mathrm{C}$ is a unit-lower-triangular matrix and $\mathrm{D}$ is a non-singular upper-triangular matrix, which is unique if it exists. 
The Gauss-Borel factorisation exists if and only if $\Delta_n\neq 0$ for all $n\geq 1$, where $\Delta_n$ is the determinant of the $n$-by-$n$ leading principal submatrix of $\mathrm{M}$.
This condition is equivalent to the existence and uniqueness of the type I and type II multiple orthogonal polynomials on the step-line with respect to $\left(v_1,\cdots,v_r\right)$.

To find a more convenient expression for the moment matrix $\mathrm{M}$ in \eqref{moment matrix explicit}, we consider the infinite vectors
\begin{align}
\chi(x)= 
\begin{bmatrix}
1 \\
x \\
x^2 \\
\vdots
\end{bmatrix}
\quad\text{and}\quad
\chi_j(x)= 
\begin{bmatrix}
e_j \\
x \, e_j \\
x^2 \, e_j \\
\vdots
\end{bmatrix}
\quad\text{for }1\leq j\leq r,
\end{align}
where $\{e_j\}_{j=1}^r$ is the canonical basis in $\R^r$. 
Using these vectors, the moment matrix $\mathrm{M}$ can be given as
\begin{align}
\label{moment matrix vector rep.}
\mathrm{M}=\sum_{j=1}^{r}\Functional{v_j}{\chi(x)\chi_j^\top(x)},
\end{align}
where a functional acting on a matrix correspond to acting on each entry of that matrix.

To make this representation clearer, we present the illustrative example $r=2$. 
Then, 
\begin{align}
\chi_1^\top(x)=\left[1,0,x,0,x^2,0,\cdots\right]
\quad\text{and}\quad
\chi_2^\top(x)=\left[0,1,0,x,0,x^2,\cdots\right],
\end{align}
so
\begin{align}
\chi(x)\chi_1^\top(x)=
\begin{bmatrix}
 1 & 0 & x & 0 & \cdots \\
 x & 0 & x^2 & 0 & \cdots \\
x^2 & 0 & x^3 & 0 & \cdots \\
\vdots & \vdots & \vdots & \vdots
\end{bmatrix}
\quad\text{and}\quad
\chi(x)\chi_2^\top(x)=
\begin{bmatrix}
0 & 1 & 0 & x & \cdots \\
0 & x & 0 & x^2 & \cdots \\
0 & x^2 & 0 & x^3 & \cdots \\
\vdots & \vdots & \vdots & \vdots
\end{bmatrix}.
\end{align}
Therefore, $\Functional{v_1}{\chi(x)\chi_1^\top(x)}+\Functional{v_2}{\chi(x)\chi_2^\top(x)}$ is equal to the moment matrix $\mathrm{M}$ in \eqref{moment matrix explicit} with $r=2$.
The deduction of \eqref{moment matrix vector rep.} for generic $r$ is completely analogous to the case $r=2$.

The following result, which we do not prove here as it can be found, in more generality, in the context of mixed multiple orthogonal polynomials in \cite[Prop.~3]{MOPmixed&Gauss-Borel}, shows how the Gauss-Borel factorisation of the moment matrix can be used to obtain explicit expressions for type II multiple orthogonal polynomials and type I linear forms along the step-line.
\begin{proposition}\label{MOP representations from the Gauss-Borel factorisation prop.}
Suppose that the moment matrix $\mathrm{M}$ in \eqref{moment matrix explicit} admits a Gauss-Borel factorisation $\mathrm{M}=\mathrm{C}\mathrm{D}$, where $\mathrm{C}$ is a unit-lower-triangular matrix and $\mathrm{D}$ is a non-singular upper-triangular matrix.
Then, all the multiple orthogonal polynomials on the step-line with respect to $\left(v_1,\cdots,v_r\right)$ exist and are unique and the sequences of type II multiple orthogonal polynomials $\seq[n\in\N]{P_n(x)}$ and type I linear forms $\seq[n\in\N]{q_n}$ on the step-line are determined by
\begin{align}
\label{MOP representations from the Gauss-Borel factorisation}
\mathrm{P}(x)
=\big[P_0(x),P_1(x),P_2(x),\cdots\big]^\top
=\mathrm{C}^{-1}\,\chi(x)
\quad\text{and}\quad
q=\big[q_0,q_1,q_2,\cdots\big]
=\sum_{j=1}^{r}\chi_j^\top(x)\mathrm{D}^{-1}v_j.
\end{align}
\end{proposition}

The latter result means that the entries of the inverses of the triangular matrices appearing in the Gauss-Borel factorisation $\mathrm{M}=\mathrm{C}\mathrm{D}$ are the coefficients of the type II and type I multiple orthogonal polynomials on the step-line with respect to $\left(v_1,\cdots,v_r\right)$, 
which we denote by $\seq{P_n(x)}$ and $\seq{A_{n,1}(x),\cdots,A_{n,r}(x)}$, respectively.
To make this statement clearer, we explicitly write these polynomials as
\begin{align} 
P_n(x)=\sum_{k=0}^{n}p_n(k)\,x^k
\quad\text{and}\quad
A_{n,i}(x)=\sum_{\ell=0}^{n_i-1}a_{n,i}(\ell)\,x^{\ell},
\quad\text{for }n\in\N\text{ and }1\leq i\leq r,
\end{align}
where, for $n=rm+k$, with $m\in\N$ and $0\leq k\leq r-1$, we have
$n_i=m+1$ if $i\leq k+1$ and $n_i=m$ if $i\geq k+2$.
Then, $\mathrm{C}^{-1}$ and $\mathrm{D}^{-1}$ are triangular matrices such that:
\begin{itemize}
\item 
$\left(\mathrm{C}^{-1}\right)_{n,k}=p_n(k)$ when $k\leq n$,

\item 
$\left(\mathrm{D}^{-1}\right)_{m,n}=a_{n,i}(\ell)$
when $m=r\ell+i-1\leq n$, with $\ell\in\N$ and $1\leq i\leq r$.
\end{itemize}

We can get the recurrence relations verified by the type II polynomials and by the type I linear forms from the Gauss-Borel factorisation $\mathrm{M}=\mathrm{C}\mathrm{D}$ of the moment matrix \eqref{moment matrix explicit}. 
We omit the proof, as it follows directly from a more general result on mixed multiple orthogonal polynomials given in \cite[Prop.~13]{MOPmixed&Gauss-Borel}.
\begin{proposition}
\label{rec. rel. via Gauss-Borel}
Suppose that the moment matrix $\mathrm{M}$ in \eqref{moment matrix explicit} admits the Gauss-Borel factorisation $\mathrm{M}=\mathrm{C}\mathrm{D}$, where $\mathrm{C}$ is a unit-lower-triangular matrix and $\mathrm{D}$ is a non-singular upper-triangular matrix.
Then, the sequences of type II multiple orthogonal polynomials $\seq[n\in\N]{P_n(x)}$ and type I linear forms $\seq[n\in\N]{q_n}$ on the step-line with respect to $\left(v_1,\cdots,v_r\right)$ satisfy the recurrence relations
\begin{align}
\label{recurrence relations from the Gauss-Borel factorisation}
\mathrm{H}\,\mathrm{P}(x)=x\,\mathrm{P}(x)
\quad\text{and}\quad
q\,\mathrm{H}=x\,q,
\end{align}
where the vectors $\mathrm{P}(x)$ and $q$ are defined in \eqref{MOP representations from the Gauss-Borel factorisation} and $\mathrm{H}$ is the banded unit-lower-Hessenberg matrix with $r$ subdiagonals which satisfies
\begin{align}
\label{Hessenberg matrix 2 representations}
\mathrm{H}
=\mathrm{C}^{-1}\Lambda\mathrm{C}
=\mathrm{D}\left(\Lambda^\top\right)^r\mathrm{D}^{-1},
\end{align}
where $\Lambda$ is the infinite matrix with ones in the supradiagonal and zeros everywhere else, i.e.,
\begin{align}
\label{matrix Lambda}
\Lambda=
\begin{bmatrix}
0 & 1 & 0 & 0 & \cdots 
\\
0 & 0 & 1 & 0 & \ddots 
\\
0 & 0 & 0 & 1 & \ddots 
\\
\vdots & \ddots & \ddots & \ddots & \ddots 
\end{bmatrix}.
\end{align}
\end{proposition}

The factorisations \eqref{Hessenberg matrix 2 representations} of the Hessenberg matrix $\mathrm{H}$ will be used to construct its bidiagonal factorisation in the next section.


\subsection{Lattice paths, branched continued fractions, and production matrices}
\hspace*{\fill}

The branched continued fractions appearing here were introduced in \cite{AlanEtAl-LPandBCF1} as generating functions of $r$-Dyck paths and were further explored in \cite{AlanEtAl-LPandBCF2,AlanEtAl-LPandBCF3}. 
We follow their terminology and definitions.
An equivalent object was also introduced in \cite{AlbenqueBouttier-Constellations} and named as multicontinued fractions.

For a positive integer $r$, a {$r$-Dyck path} is a path in the lattice $\N^2$, starting and ending on the horizontal axis, using steps $(1,1)$, called {rises}, and $(1,-r)$, called {$r$-falls}.
More generally, a {partial $r$-Dyck path} in $\N^2$ is allowed to start and end anywhere in $\N^2$, using steps $(1,1)$ and $(1,-r)$.

We consider weighted (partial) $r$-Dyck paths, where each rise has weight $1$ and each $r$-fall to height $i$ has weight $\alpha_i$ and the weight of a path is the product of the weights of its steps. 
Moreover, the generating polynomial of a set of lattice paths is the sum of the weights of the paths in the set.
Observe that the length of a $r$-Dyck path is always a multiple of $r+1$. 
For an infinite sequence of weights $\seq[k\in\N]{\alpha_k}$, 
the {$r$-Stieltjes-Rogers polynomial} $\StieltjesRogersPoly[r]{n}{\boldsymbol{\alpha}}$, with $n\in\N$, 
is the generating polynomial of the $r$-Dyck paths from $(0,0)$ to $\big((r+1)n,0\big)$. 
They are an extension of the Stieltjes-Rogers polynomials introduced by Flajolet in \cite{FlajoletContinuedFractions}, which correspond to the case $r=1$.
%
See Fig.\ref{2-Dyck paths of length 6 fig.} for an illustrative example of how to compute $\StieltjesRogersPoly[2]{2}{\boldsymbol{\alpha}}$.
\begin{figure}[h]
\centering
\begin{tikzpicture}[scale=0.5]
\draw[->,color=black] (0,0) -- (0,5);
\draw[->,color=black] (0,0) -- (7,0);
\filldraw[black] (0,0) circle (4pt);
\draw[-,color=black] (0,0) -- (1,1);
\filldraw[black] (1,1) circle (4pt);
\draw[-,color=black] (1,1) -- (2,2);
\filldraw[black] (2,2) circle (4pt);
\draw[-,color=black] (2,2) -- (3,0);
\node[] at (3,1) {$\alpha_0$};
\filldraw[black] (3,0) circle (4pt);
\draw[-,color=black] (3,0) -- (4,1);
\filldraw[black] (4,1) circle (4pt);
\draw[-,color=black] (4,1) -- (5,2);
\filldraw[black] (5,2) circle (4pt);
\draw[-,color=black] (5,2) -- (6,0);
\node[] at (6,1) {$\alpha_0$};
\filldraw[black] (6,0) circle (4pt);
\end{tikzpicture}
\quad
\begin{tikzpicture}[scale=0.5]
\draw[->,color=black] (0,0) -- (0,5);
\draw[->,color=black] (0,0) -- (7,0);
\filldraw[black] (0,0) circle (4pt);
\draw[-,color=black] (0,0) -- (1,1);
\filldraw[black] (1,1) circle (4pt);
\draw[-,color=black] (1,1) -- (2,2);
\filldraw[black] (2,2) circle (4pt);
\draw[-,color=black] (2,2) -- (3,3);
\filldraw[black] (3,3) circle (4pt);
\draw[-,color=black] (3,3) -- (4,1);
\node[] at (4,2) {$\alpha_1$};
\filldraw[black] (4,1) circle (4pt);
\draw[-,color=black] (4,1) -- (5,2);
\filldraw[black] (5,2) circle (4pt);
\draw[-,color=black] (5,2) -- (6,0);
\node[] at (6,1) {$\alpha_0$};
\filldraw[black] (6,0) circle (4pt);
\end{tikzpicture}
\quad
\begin{tikzpicture}[scale=0.5]
\draw[->,color=black] (0,0) -- (0,5);
\draw[->,color=black] (0,0) -- (7,0);
\filldraw[black] (0,0) circle (4pt);
\draw[-,color=black] (0,0) -- (1,1);
\filldraw[black] (1,1) circle (4pt);
\draw[-,color=black] (1,1) -- (2,2);
\filldraw[black] (2,2) circle (4pt);
\draw[-,color=black] (2,2) -- (3,3);
\filldraw[black] (3,3) circle (4pt);
\draw[-,color=black] (3,3) -- (4,4);
\filldraw[black] (4,4) circle (4pt);
\draw[-,color=black] (4,4) -- (5,2);
\node[] at (5,3) {$\alpha_2$};
\filldraw[black] (5,2) circle (4pt);
\draw[-,color=black] (5,2) -- (6,0);
\node[] at (6,1) {$\alpha_0$};
\filldraw[black] (6,0) circle (4pt);
\end{tikzpicture}
\caption{All $2$-Dyck paths from $(0,0)$ to $(6,0)$
$\implies\StieltjesRogersPoly[2]{2}{\boldsymbol{\alpha}}
=\alpha_0^2+\alpha_0\alpha_1+\alpha_0\alpha_2
$.}
\label{2-Dyck paths of length 6 fig.}
\end{figure}
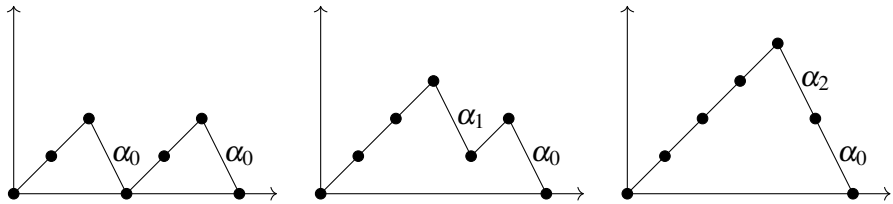

A partial $r$-Dyck path starting at $(0,0)$ only passes by points $(x,y)\in\N^2$ such that $y=x-(r+1)m$, where $m\in\N$ is the number of $r$-falls of the path and $x$ is the total number of steps.
We are interested in the following generalisations of the $r$-Stieltjes-Rogers polynomials
(introduced in \cite{AlanEtAl-LPandBCF1} and \cite{AlanEtAl-LPandBCF3}, respectively):
\begin{itemize}
\item 
the {modified $r$-Stieltjes-Rogers polynomials of type $j$}, 
denoted by $\modifiedStieltjesRogersPoly[r]{n}{j}{\boldsymbol{\alpha}}$ with $n,j\in\N$, 
are the generating polynomials of the partial $r$-Dyck paths from $(0,0)$ to $\big((r+1)n+j,j\big)$,
\item 
the {generalised $r$-Stieltjes-Rogers polynomials of type $j$}, 
denoted by $\generalisedStieltjesRogersPolyTypeJ[r]{n}{k}{j}{\boldsymbol{\alpha}}$ with $n,k,j\in\N$, 
are the generating polynomials of the partial $r$-Dyck paths from $(0,0)$ to $\big((r+1)n+j,(r+1)k+j\big)$.
\end{itemize}

Let $f_0(t)$ be the generating function for $r$-Dyck paths, considered as a formal power series in $t$:
\begin{align}
f_0(t)=\sum_{n=0}^{\infty}\StieltjesRogersPoly{n}{\boldsymbol{\alpha}}t^n.
\end{align}
More generally, for any $k\in\N$, let $f_k(t)$ be the generating function for partial $r$-Dyck paths which start and end at height $k$ without ever going below it 
(i.e., $f_k(t)$ is $f_0(t)$ with each $\alpha_i$ replaced by $\alpha_{i+k}$).
The sequence $\seq[k\in\N]{f_k(t)}$ satisfies the functional equation (see \cite[Eqs.~2.26-2.27]{AlanEtAl-LPandBCF1})
\begin{align}
\label{functional equation for the generating function of r-Dyck paths}
f_k(t)=1+\alpha_k\,t\prod_{j=0}^{r}f_{k+j}(t)
\Leftrightarrow
f_k(t)=\frac{1}{\dis 1-\alpha_k\,t\prod_{j=1}^{r}f_{k+j}(t)}
\quad\text{for all }k\in\N.
\end{align}
Successively iterating the former we find that (see \cite[Eqs.~2.28]{AlanEtAl-LPandBCF1})
\begin{align}
\label{branched cont. frac. of Stieltjes type}
f_k(t)
=\cfrac{1}{\dis 1-\alpha_k\,t\prod_{i_1=1}^{r}\cfrac{1}{\dis 1-\alpha_{k+i_1}\,t\prod_{i_2=1}^{r}\cfrac{1}{\dis 1-\alpha_{k+i_1+i_2}\,t\prod_{i_3=1}^{r}\cfrac{1}{\dis 1-\cdots}}}}
\quad\text{for all }k\in\N.
\end{align}

The right-hand side of \eqref{branched cont. frac. of Stieltjes type} is known as a {(Stieltjes-type) $r$-branched continued fraction}, because it reduces to a classical Stieltjes continued fraction (also known as an S-fraction) when $r=1$. 

As explained in \cite{AlanSokal.MOP-ProdMat-BCF}, to connect lattice paths and branched continued fractions with multiple orthogonal polynomials, it is fundamental to use the production matrices originally introduced in \cite{ProductionMatrices2005}.

Let $\mathrm{H}=\left(h_{i,j}\right)_{i,j\in\N}$ be an infinite matrix with entries in a commutative ring $R$.
If all the powers of $\mathrm{H}$ are well-defined (which is the case when $\mathrm{H}$ is an Hessenberg matrix), we can define an infinite matrix $\mathrm{S}=\left(s_{n,k}\right)_{n,k\in\N}$ by $s_{n,k}=\left(\mathrm{H}^n\right)_{0,k}$ for all $n,k\in\N$.
We call $\mathrm{H}$ the {production matrix} and $\mathrm{S}$ the {output matrix}.



\section{Bidiagonal factorisations linked to the Gauss-Borel factorisation}
\label{Factorisations using MOP coefficients}
\hspace*{\fill}

For a system of linear functionals $\left(v_1,\cdots,v_r\right)$, we consider its Christoffel transformations
\begin{align}
\label{systems V^[j]}
V^{[j]}
=\left(v_{j+1},\cdots,v_r,x\,v_1,\cdots,x\,v_j\right),\;
0\leq j\leq r.
\end{align}
In particular, $V^{[0]}=\left(v_1,\cdots,v_r\right)$ and $V^{[r]}=\left(x\,v_1,\cdots,x\,v_r\right)$.

In this section, we show that the existence and uniqueness of the multiple orthogonal polynomials on the step-line with respect to all systems $V^{[j]}$ in \eqref{systems V^[j]} is a sufficient condition to obtain bidiagonal factorisations of the Hessenberg recurrence matrices of these systems, with all nontrivial entries of the bidiagonal matrices different from zero.
In particular, we obtain a factorisation $\mathrm{H}=\mathrm{L}_1\cdots\mathrm{L}_r\,\mathrm{U}$ of the Hessenberg recurrence matrix of the system $\left(v_1,\cdots,v_r\right)$.

We start by using the Gauss-Borel factorisation of the moment matrices of the systems $V^{[j]}$ to express the nontrivial entries of the bidiagonal matrices appearing in our factorisations.
Then, we rewrite the formulas for the nontrivial entries of the bidiagonal matrices, firstly using coefficients of the type I and type II multiple orthogonal polynomials on the step-line with respect to the systems $V^{[j]}$ instead of their Gauss-Borel factorisation, and later using the determinants of the finite leading principal submatrices of the moment matrices of the systems $V^{[j]}$.

The following theorem is the main theoretical result of this section, where we show that the existence and uniqueness of the multiple orthogonal polynomials on the step-line with respect to all systems $V^{[j]}$ in \eqref{systems V^[j]} is a sufficient condition to obtain bidiagonal factorisations of the Hessenberg recurrence matrices of these systems, using the entries of either one of the triangular matrices appearing in the Gauss-Borel factorisation of the moment matrices of the systems $V^{[j]}$.
\begin{theorem}
\label{bidiagonal fatorisations using the Gauss-Borel fatorisation thm.}
For a system of linear functionals $\left(v_1,\cdots,v_r\right)$ such that the multiple orthogonal polynomials on the step-line with respect to all systems $V^{[j]}$, $0\leq j\leq r$, in \eqref{systems V^[j]} exist and are unique, 
let $\dis{\mathrm{M}^{[j]}=\mathrm{C}^{[j]}\mathrm{D}^{[j]}}$ be the Gauss-Borel factorisation of the moment matrix $\mathrm{M}^{[j]}$ of the system $V^{[j]}$ for each $0\leq j\leq r$,
where ${\mathrm{C}^{[j]}=\left(c^{[j]}_{i,k}\right)_{i,k\in\N}}$ and ${\mathrm{D}^{[j]}=\left(d^{[j]}_{i,k}\right)_{i,k\in\N}}$
are, respectively, a unit-lower-triangular matrix and a non-singular upper-triangular matrix.
Then, for any $0\leq j\leq r$, the Hessenberg recurrence matrix $\mathrm{H}^{[j]}$ of the system $V^{[j]}$ admits the bidiagonal factorisation
\begin{align}
\label{eq:bidiagonal_j}
\mathrm{H}^{[j]}=\mathrm{L}_{j+1}\cdots\mathrm{L}_r\mathrm{U}\mathrm{L}_{1}\cdots\mathrm{L}_j,
\end{align}
where $\mathrm{L}_1,\cdots,\mathrm{L}_r$, and $\mathrm{U}$ are bidiagonal matrices of the form in \eqref{bidiagonal matrices U and L_k}, with nontrivial entries $\seq[m\in\N]{\alpha_m}$ all different from zero such that, for all $n\in\N$,
\begin{align}
\label{alpha (r+1)n}
\alpha_{(r+1)n} 
=\frac{d^{[r]}_{n,n}}{d^{[0]}_{n,n}}=c^{[0]}_{n+1,n}-c^{[r]}_{n,n-1},
\quad\text{with }c^{[r]}_{0,-1}=0,
\end{align}
and
\begin{align}
\label{alpha (r+1)n+i, i from 1 to r}
\alpha_{(r+1)n+i} 
=\frac{d^{[i-1]}_{n+1,n+1}}{d^{[i]}_{n,n}}
=c^{[i]}_{n+1,n}-c^{[i-1]}_{n+1,n} 
\quad\text{for }1\leq i\leq r.
\end{align}
In particular, the Hessenberg matrix $\mathrm{H}=\mathrm{H}^{[0]}$ admits the bidiagonal factorisation \eqref{bidiagonal matrix factorisation L_1...L_r U introduction}.
\end{theorem}

To prove this theorem, we need the following auxiliary result.
\begin{lemma}\label{Christoffel transformations prop.}
Let $\mathrm{M}$ be the moment matrix in \eqref{moment matrix explicit} and $\Lambda$ be the matrix given in \eqref{matrix Lambda}. 
Then:
\begin{enumerate}
\item
$\mathrm{M}\left(\Lambda^\top\right)^j $ is the moment matrix of the system $V^{[j]}$
for all $1\leq j\leq r$,

\item
$\Lambda\mathrm{M}$ is the moment matrix of the system $V^{[r]}$.
\end{enumerate}
\end{lemma}

\begin{proof}[Proof of Lemma \ref{Christoffel transformations prop.}]
Recalling the formula \eqref{moment matrix vector rep.} for the moment matrix $\mathrm{M}$ of $\left(v_1,\cdots,v_r\right)$, we have
\begin{align}
\mathrm{M}\Lambda^\top
=\sum_{i=1}^{r} \Functional{v_i}{\chi(x)\chi_i^\top(x)}\Lambda^\top
=\sum_{i=1}^{r}\Functional{v_i}{\chi(x)\big(\Lambda\chi_i(x)\big)^\top}.
\end{align}
It is clear from the definition \eqref{matrix Lambda} of $\Lambda$ that 
$\Lambda\chi_1(x)=x\,\chi_{r}(x)$ and
$\Lambda\chi_j(x)=\chi_{j-1}(x)$ for $2\leq j\leq r$.
Hence,
\begin{align}
\mathrm{M}\Lambda^\top
=\sum_{i=2}^{r}\Functional{v_i}{\chi(x)\chi_{i-1}^\top(x)}
+\Functional{x\,v_1}{\chi(x)\chi_r^\top(x)}
\end{align}
Therefore, $\mathrm{M}\Lambda^\top$ is the moment matrix of the system $V^{[1]}=\left(v_2,\cdots,v_r,x\,v_1\right)$. 
Iterating this procedure, we find that $\mathrm{M}\left(\Lambda^\top\right)^j$ is the moment matrix of $V^{[j]}=\left(v_{j+1},\cdots,v_r,x\,v_1,\cdots,x\,v_j\right)$ for all $1\leq j\leq r$, proving $(i)$.
In particular, 
the moment matrix of $V^{[r]}=\left(x\,v_1,\cdots,x\,v_r\right)$ is
\begin{equation}
\mathrm{M}\left(\Lambda^\top\right)^r
=\sum_{i=1}^{r}\Functional{v_i}{\chi(x)\,\chi_i^\top(x)}\big(\Lambda^r\big)^\top
=\sum_{i=1}^{r}\Functional{v_i}{\chi(x)\Big(\Lambda^r\chi_i(x)\Big)^\top}.
\end{equation}
We can observe from \eqref{matrix Lambda} that $\Lambda^r\chi_i(x)=x\,\chi_i(x)$ and $\Lambda\chi(x)=x\,\chi(x)$.
As a result,
\begin{align}
\mathrm{M}\left(\Lambda^\top\right)^r
=\sum_{i=1}^{r}\Functional{v_i}{x\,\chi(x)\chi_i^\top(x)}
=\sum_{i=1}^{r}\Functional{v_i}{\Lambda\,\chi(x)\chi_i^\top(x)}
=\Lambda\mathrm{M}.
\end{align}
Therefore, we conclude that the moment matrix of $\left(x\,v_1,\cdots,x\,v_r\right)$ is $\Lambda\mathrm{M}$, proving $(ii)$.
\end{proof}

\begin{proof}[Proof of Theorem \ref{bidiagonal fatorisations using the Gauss-Borel fatorisation thm.}]
Using Lemma \ref{Christoffel transformations prop.}, we know that the moment matrices of the systems $V^{[i]}$ and $V^{[i-1]}$ are related by 
$\mathrm{M}^{[i]}=\mathrm{M}^{[i-1]}\Lambda^\top$ for any $1\leq i\leq r$. 
Applying the Gauss-Borel factorisation to both $\mathrm{M}^{[i]}$ and $\mathrm{M}^{[i-1]}$, we obtain
$\mathrm{C}^{[i]}\mathrm{D}^{[i]}=\mathrm{C}^{[i-1]}\mathrm{D}^{[i-1]}\Lambda^\top$.
Therefore, we can define 
\begin{align}
\label{definition of L_i}
\mathrm{L}_i
=\left(\mathrm{C}^{[i-1]}\right)^{-1}\mathrm{C}^{[i]}
=\mathrm{D}^{[i-1]}\Lambda^\top\left(\mathrm{D}^{[i]}\right)^{-1} 
\quad\text{for all }1\leq i\leq r.
\end{align}

Because $\mathrm{C}^{[i-1]}$ and $\mathrm{C}^{[i]}$ are unit-lower-triangular matrices, 
$\mathrm{L}_i$ is also unit-lower-triangular.
Moreover, because $\mathrm{D}^{[i-1]}$ and $\mathrm{D}^{[i]}$ are non-singular upper-triangular matrices and all nonzero entries of $\Lambda^\top$ are in its first subdiagonal,
$\mathrm{L}_i$ is upper-Hessenberg, i.e., its first subdiagonal entries are nontrivial and all the entries below it are equal to $0$.
Therefore, $\mathrm{L}_i$ is a lower-bidiagonal matrix of the form in \eqref{bidiagonal matrices U and L_k} and we denote its subdiagonal entries by $\alpha_{(r+1)n+i}$, with $n\in\N$, as in \eqref{bidiagonal matrices U and L_k}.

Recalling Proposition \ref{rec. rel. via Gauss-Borel}, 
we know that 
$\mathrm{H}^{[j]}=\mathrm{D}^{[j]}\left(\Lambda^\top\right)^r\left(\mathrm{D}^{[j]}\right)^{-1}$ 
for all $0\leq j\leq r$.
Taking into account \eqref{definition of L_i}, we have 
$\mathrm{D}^{[j]}\,\Lambda^\top=\mathrm{L}_{j+1}\,\mathrm{D}^{[j+1]}$ for any $0\leq j\leq r-1$.
Successively iterating this equation, we find that
$\mathrm{D}^{[j]}\left(\Lambda^\top\right)^{r-j}=\mathrm{L}_{j+1}\cdots\mathrm{L}_r\mathrm{D}^{[r]}$ for all $0\leq j\leq r$.
As a result, we derive that
\begin{align}
\label{first decomposition H^[j]}
\mathrm{H}^{[j]}
=\mathrm{L}_{j+1}\cdots\mathrm{L}_r\,\mathrm{D}^{[r]}
\left(\Lambda^\top\right)^j\left(\mathrm{D}^{[j]}\right)^{-1}
\quad\text{for all }0\leq j\leq r.
\end{align}
In particular, taking $j=0$, we have 
\begin{align}
\label{first decomposition H^[0]}
\mathrm{H}^{[0]}=\mathrm{L}_1\mathrm{L}_2\cdots\mathrm{L}_r\mathrm{D}^{[r]}\left(\mathrm{D}^{[0]}\right)^{-1}.
\end{align}

Now, we define $\mathrm{U}=\mathrm{D}^{[r]}\left(\mathrm{D}^{[0]}\right)^{-1}$ and we check that $\mathrm{U}$ is an upper-bidiagonal matrix of the form in \eqref{bidiagonal matrices U and L_k}.
%
By definition, $\mathrm{U}$ is a non-singular upper-triangular matrix.
Moreover, using Lemma \ref{Christoffel transformations prop.}, 
$\mathrm{M}^{[r]}=\Lambda\mathrm{M}^{[0]}$.
Thus, applying the Gauss-Borel factorisation,
$\mathrm{C}^{[r]}\mathrm{D}^{[r]}=\Lambda\mathrm{C}^{[0]}\mathrm{D}^{[0]}$.
Equivalently,
$\mathrm{D}^{[r]}=\left(\mathrm{C}^{[r]}\right)\Lambda\mathrm{C}^{[0]}\mathrm{D}^{[0]}$.
So, we derive the alternative expression
$\mathrm{U}=\left(\mathrm{C}^{[r]}\right)^{-1}\Lambda\mathrm{C}^{[0]}$.
Because $\mathrm{C}^{[0]}$ and $\mathrm{C}^{[r]}$ are unit-lower-triangular matrices, 
$\mathrm{U}$ is a lower-Hessenberg matrix whose supradiagonal entries are all equal to $1$. 

Hence, we have shown that $\mathrm{U}$ is an upper-bidiagonal matrix of the form in \eqref{bidiagonal matrices U and L_k} and we denote its diagonal entries by $\alpha_{(r+1)n}$, with $n\in\N$, as in \eqref{bidiagonal matrices U and L_k}.
Therefore, $\mathrm{H}=\mathrm{H}^{[0]}$ admits the bidiagonal factorisation $\mathrm{H}=\mathrm{L}_1\mathrm{L}_2\cdots\mathrm{L}_r\mathrm{U}$, which corresponds to \eqref{bidiagonal matrix factorisation L_1...L_r U introduction} and to the case $j=0$ of \eqref{eq:bidiagonal_j}.

Furthermore, recalling that $\mathrm{L}_i=\mathrm{D}^{[i-1]}\Lambda^\top\left(\mathrm{D}^{[i]}\right)^{-1}$, we have $\Lambda^\top\left(\mathrm{D}^{[i]}\right)^{-1}=\left(\mathrm{D}^{[i-1]}\right)^{-1}\mathrm{L}_i$ for all $1\leq i\leq r$.
Applying iteratively this identity to \eqref{first decomposition H^[j]}, we deduce that
\begin{align}
\mathrm{H}^{[j]}
=\mathrm{L}_{j+1}\cdots\mathrm{L}_r\,\mathrm{D}^{[r]}
\left(\Lambda^\top\right)^{j-1}\left(\mathrm{D}^{[j-1]}\right)^{-1}\mathrm{L}_j
=\cdots
=\mathrm{L}_{j+1}\cdots\mathrm{L}_r\,\mathrm{D}^{[r]}
\left(\mathrm{D}^{[0]}\right)^{-1}\mathrm{L}_1\cdots\mathrm{L}_j.
\end{align}
Taking into account that $\mathrm{U}=\mathrm{D}^{[r]}\left(\mathrm{D}^{[0]}\right)^{-1}$, we conclude that \eqref{eq:bidiagonal_j} holds for all $0\leq j\leq r$.

Now, we prove that the nontrivial entries of the bidiagonal matrices $\mathrm{L}_1,\cdots,\mathrm{L}_r$ and $\mathrm{U}$ satisfy \eqref{alpha (r+1)n}-\eqref{alpha (r+1)n+i, i from 1 to r}.
Because $\mathrm{D}^{[r]}$ and $\mathrm{D}^{[0]}$ are non-singular upper-triangular matrices, the diagonal entries of $\mathrm{U}=\mathrm{D}^{[r]}\left(\mathrm{D}^{[0]}\right)^{-1}$ can be expressed as
\begin{align}
\label{alpha (r+1)n, formula as a ratio}
\alpha_{(r+1)n}
=\left(\mathrm{U}\right)_{n,n}
=\left(\mathrm{D}^{[r]}\right)_{n,n}\left(\left(\mathrm{D}^{[0]}\right)^{-1}\right)_{n,n}
=\dfrac{d^{[r]}_{n,n}}{d^{[0]}_{n,n}}.
\end{align}

We can also find an alternative formula for $\alpha_{(r+1)n}$ using the expression
$\mathrm{U}=\left(\mathrm{C}^{[r]}\right)^{-1}\Lambda\,\mathrm{C}^{[0]}$.
Taking into account that $\mathrm{C}^{[r]}$ and $\mathrm{C}^{[0]}$ are unit-lower-triangular matrices and recalling the definition \eqref{matrix Lambda} of $\Lambda$, we compute the diagonal entries of this product and deduce that
$\alpha_0=\left(\mathrm{U}\right)_{0,0}=\left(\mathrm{C}^{[0]}\right)_{1,0}=a_{1,0}^{[0]}$ and
\begin{align}
\label{alpha (r+1)n, formula as a difference}
\alpha_{(r+1)n}
=\left(\mathrm{U}\right)_{n,n}
=\left(\left(\mathrm{C}^{[r]}\right)^{-1}\right)_{n,n-1}+\left(\mathrm{C}^{[0]}\right)_{n+1,n}
=c^{[0]}_{n+1,n}-c^{[r]}_{n,n-1}
\quad\text{for all }n\geq 1.
\end{align}
Therefore, \eqref{alpha (r+1)n} holds.

Analogously, we compute the subdiagonal entries of $\mathrm{L}_i$ using the expression $\mathrm{L}_i=\left(\mathrm{C}^{[i-1]}\right)^{-1}\mathrm{C}^{[i]}$ to obtain
\begin{align}
\label{alpha (r+1)n+i, i from 1 to r, formula as a difference}
\alpha_{(r+1)n+i}
=\left(\mathrm{L}_i\right)_{n+1,n}
=\left(\mathrm{C}^{[i]}\right)_{n+1,n}+\left(\left(\mathrm{C}^{[i-1]}\right)^{-1}\right)_{n+1,n}
=c^{[i]}_{n+1,n}-c^{[i-1]}_{n+1,n},
\end{align}
or using the expression $\mathrm{L}_i=\mathrm{D}^{[i-1]}\Lambda^\top\left(\mathrm{D}^{[i]}\right)^{-1}$ to obtain
\begin{align}
\label{alpha (r+1)n+i, i from 1 to r, formula as a ratio}
\alpha_{(r+1)n+i}
=\left(\mathrm{L}_i\right)_{n+1,n}
=\left(\mathrm{D}^{[i-1]}\right)_{n+1,n+1}\left(\left(\mathrm{D}^{[i-1]}\right)^{-1}\right)_{n,n}
=\frac{d^{[i-1]}_{n+1,n+1}}{d^{[i]}_{n,n}}.
\end{align}
As a result, \eqref{alpha (r+1)n+i, i from 1 to r} also holds.
Finally, observe that \eqref{alpha (r+1)n, formula as a ratio} and \eqref{alpha (r+1)n+i, i from 1 to r, formula as a ratio} imply that $\alpha_m\neq 0$ for all $m\in\N$, because each $\alpha_m$ is the ratio of two diagonal entries of two non-singular upper-triangular matrices.
\end{proof}

As explained after Proposition \ref{MOP representations from the Gauss-Borel factorisation prop.}, the entries of the inverses of the triangular matrices appearing in the Gauss-Borel factorisations $\dis{\mathrm{M}^{[j]}=\mathrm{C}^{[j]}\mathrm{D}^{[j]}}$ of the moment matrices of the systems $V^{[j]}$, $0\leq j\leq r$, are the coefficients of the type II and type I multiple orthogonal polynomials on the step-line with respect to the corresponding system.
As a result, we can rewrite \eqref{alpha (r+1)n} and \eqref{alpha (r+1)n+i, i from 1 to r}, using the coefficients of those multiple orthogonal polynomials.
For that purpose, we explicitly write the type II and type I multiple orthogonal polynomials on the step-line with respect to $V^{[j]}$, $0\leq j\leq r$, as 
\begin{align} 
P^{[j]}_n(x)=\sum_{k=0}^{n}p^{[j]}_n(k)\,x^k
\quad\text{and}\quad
A^{[j]}_{n,i}(x)=\sum_{\ell=0}^{n_i-1}a^{[j]}_{n,i}(\ell)\,x^{\ell},
\quad\text{for }n\in\N\text{ and }1\leq i\leq r,
\end{align}
where, for $n=rm+k$, with $m\in\N$ and $0\leq k\leq r-1$, we have
$n_i=m+1$ if $i\leq k+1$ and $n_i=m$ if $i\geq k+2$.
%

Then:
\begin{itemize}
\item 
the diagonal entries of $\left(\mathrm{D}^{[j]}\right)^{-1}$ are the leading coefficients of the type I multiple orthogonal polynomials $A^{[j]}_{rm+k,k+1}(x)$,
so $d^{[j]}_{rm+k,rm+k}=\left(a^{[j]}_{rm+k,k+1}(m)\right)^{-1}$ for all $m\in\N$, $0\leq k\leq r-1$, and $0\leq j\leq r$,

\item 
the subdiagonal entries of $\left(\mathrm{C}^{[j]}\right)^{-1}$ are the coefficients $p^{[j]}_{n+1}(n)$ of type II multiple orthogonal polynomials $P^{[j]}_{n+1}(x)$, 
so $c^{[j]}_{n+1,n}=-p^{[j]}_{n+1}(n)$ for all $n\in\N$ and $0\leq j\leq r$. 
\end{itemize}

Therefore, the result below follows directly from Theorem \ref{bidiagonal fatorisations using the Gauss-Borel fatorisation thm.}.
\begin{corollary}\label{formulas for the alphas using the MOP coeff}
Let $\left(v_1,\cdots,v_r\right)$ be a system of linear functionals such that the multiple orthogonal polynomials on the step-line with respect to all systems $V^{[j]}$, $0\leq j\leq r$, in \eqref{systems V^[j]} exist and are unique.
Then, for any $0\leq j\leq r$, the Hessenberg recurrence matrix $\mathrm{H}^{[j]}$ of the system $V^{[j]}$ admits the bidiagonal factorisation \eqref{eq:bidiagonal_j},
with nontrivial entries $\seq[\ell\in\N]{\alpha_{\ell}}$ which we can express using coefficients of the type I and type II multiple orthogonal polynomials as follows:
\begin{enumerate}
\item 
for all $m\in\N$, 
\begin{align}
\label{bidiagonal matrix entries using type I MOP 1}
\alpha_{(r+1)(rm+k)}  
=\frac{a^{[0]}_{rm+k,k+1}(m)}{a^{[r]}_{rm+k,k+1}(m)}
\quad\text{for any }0\leq k\leq r-1,
\end{align}
and, for $1\leq i\leq r$,
\begin{align}
\label{bidiagonal matrix entries using type I MOP 2}
\alpha_{(r+1)(rm+k)+i} 
=\frac{a^{[i]}_{rm+k,k+1}(m)}{a^{[i-1]}_{rm+k+1,k+2}(m)}
\text{ if }k\leq r-2
\quad\text{and}\quad 
\alpha_{(r+1)(rm+r-1)+i} 
=\frac{a^{[i]}_{rm+r-1,r}(m)}{a^{[i-1]}_{r(m+1),1}(m+1)};
\end{align}

\item 
for all $n\in\N$,
\begin{align}
\label{eq:alpha_tipoII}
\alpha_{(r+1)n}=p^{[r]}_{n}(n-1)-p^{[0]}_{n+1}(n)
\quad\text{and}\quad 
\alpha_{(r+1)n+i}=p^{[i-1]}_{n+1}(n)-p^{[i]}_{n+1}(n)
\quad\text{for } 
1\leq i\leq r.
\end{align}
\end{enumerate}
\end{corollary}

For $n\geq 1$ and $0\leq j\leq r$, we denote by $\Delta_n^{[j]}$ the determinant of the $n$-by-$n$ leading principal submatrix of the moment matrix of the system $\mathrm{V}^{[j]}$.
The coefficients of the multiple orthogonal polynomials on the step-line with respect to the systems $V^{[j]}$ can be written as ratios of the determinants $\Delta_n^{[j]}$ and determinants obtained from them by removing a specific row and column;
consequently, the same applies to the entries $d^{[j]}_{n,n}$ and $c^{[j]}_{n+1,n}$ in \eqref{alpha (r+1)n} and \eqref{alpha (r+1)n+i, i from 1 to r}.
Therefore, we find expressions for the nontrivial entries of the bidiagonal matrices in the factorisations of the Hessenberg recurrence matrices using those determinants, as follows.
\begin{corollary}\label{formulas for the alphas using determinants}
Let $\left(v_1,\cdots,v_r\right)$ be a system of linear functionals such that the multiple orthogonal polynomials on the step-line with respect to all systems $V^{[j]}$, $0\leq j\leq r$, in \eqref{systems V^[j]} exist and are unique.
Then, for any $0\leq j\leq r$, the Hessenberg recurrence matrix $\mathrm{H}^{[j]}$ of the system $V^{[j]}$ admits the bidiagonal factorisation \eqref{eq:bidiagonal_j},
with nontrivial entries $\seq[m\in\N]{\alpha_m}$, which, for $n\in\N$, can be expressed as
\begin{align}
\label{formulas for the alphas as ratios of 2-by-2 determinants}
\alpha_{(r+1)n}=\frac{\Delta_{n+1}^{[r]}\,\Delta_n^{[0]}}{\Delta_n^{[r]}\,\Delta_{n+1}^{[0]}}
\quad\text{and}\quad
\alpha_{(r+1)n+i}=\frac{\Delta_{n+2}^{[i-1]}\,\Delta_n^{[i]}}{\Delta_{n+1}^{[i-1]}\,\Delta_{n+1}^{[i]}}
\quad\text{for }1\leq i\leq r,
\end{align}
and as
\begin{align}
\label{formulas for the alphas as diferences of 2-by-2 determinants}
\alpha_{(r+1)n}=\frac{\widehat{\Delta}^{[0]}_{n+2}}{\Delta^{[0]}_{n+1}}-\frac{\widehat{\Delta}^{[r]}_{n+1}}{\Delta^{[r]}_n}
\quad\text{and}\quad
\alpha_{(r+1)n+i}=\frac{\widehat{\Delta}^{[i]}_{n+2}}{\Delta^{[i]}_{n+1}} -\frac{\widehat{\Delta}^{[i-1]}_{n+2}}{\Delta^{[i-1]}_{n+1}}
\quad\text{for }1\leq i\leq r,
\end{align}
where, for any $0\leq j\leq r$, 
$\dis\Delta_0^{[j]}=1$, $\widehat{\Delta}^{[j]}_1=0$, 
and for $n\geq 1$:
\begin{itemize}
\item 
$\Delta^{[j]}_n$ is the determinant of the $n$-by-$n$ leading principal submatrix $\mathrm{M}^{[j]}_n$ of the moment matrix of $\mathrm{V}^{[j]}$,
\item 
$\widehat{\Delta}^{[j]}_{n+1}$ is the determinant of the matrix obtained from $\mathrm{M}^{[j]}_{n+1}$ by deleting its $n$-th row and $(n+1)$-th column.
\end{itemize}
Moreover, if $\Delta^{[j-1]}_1=\Functional{v_j}{1}>0$ for all $1\leq j\leq r$, 
then $\alpha_m>0$ for all $m\in\N$ if and only if $\Delta^{[j]}_n>0$ for all $n\in\N$ and $0\leq j\leq r$.
\end{corollary}

Recall that the existence and uniqueness of the multiple orthogonal polynomials on the step-line with respect to all systems $V^{[j]}$, $0\leq j\leq r$, in \eqref{systems V^[j]} is equivalent to having $\Delta^{[j]}_n\neq 0$ for all $n\in\N$ and $0\leq j\leq r$.
As a result, the formulas \eqref{formulas for the alphas as ratios of 2-by-2 determinants} and \eqref{formulas for the alphas as diferences of 2-by-2 determinants} are well defined and \eqref{formulas for the alphas as ratios of 2-by-2 determinants} implies that $\alpha_m\neq 0$ for all $m\in\N$.

The formulas in \eqref{formulas for the alphas as ratios of 2-by-2 determinants} were originally obtained at the end of \cite[\S 7]{GeneticSumsAptekarevEtAl}.
We will see in Remark \ref{alphas are the same in Sections 3 and 4} that they have also appeared before in a completely different, combinatorial, context.

\begin{remark}
Observe that the moment matrices $\mathrm{M}^{[j]}$ of the systems $\mathrm{V}^{[j]}$ can be obtained from the moment matrix $\mathrm{M}$ of the original system $\left(v_1,\cdots,v_r\right)$, given in \eqref{moment matrix explicit}, by deleting its first $j$ columns.
Therefore, when $r=1$, the condition $\Delta^{[j]}_n>0$ for all $n\in\N$ and $j\in\{0,1\}$ reduces to the classical necessary and sufficient condition for a sequence of real numbers to be a Stieltjes moment sequence (the sequence of the moments of a positive measure on $\R^+$) \cite{StieltjesRecherchesFractionsContinues}.
Hence, we can see the condition $\Delta^{[j]}_n>0$ for all $n\in\N$ and $0\leq j\leq r$ as an extension of that condition to the context of multiple orthogonality.
\end{remark}

\begin{proof}[Proof of Corollary \ref{formulas for the alphas using determinants}]
Let $0\leq j\leq r$ and $n\geq 1$.
The Gauss-Borel factorisation $\dis\mathrm{M}^{[j]}=\mathrm{C}^{[j]}\mathrm{D}^{[j]}$ of the moment matrix of the system $V^{[j]}$ implies that
$\dis\mathrm{M}^{[j]}\left(\mathrm{D}^{[j]}\right)^{-1}=\mathrm{C}^{[j]}$
and
$\dis\left(\mathrm{C}^{[j]}\right)^{-1}\mathrm{M}^{[j]}=\mathrm{D}^{[j]}$.
%
Considering the $n$-th column and $n$-th row, respectively, of these identities, we get
\begin{align}
\mathrm{M}^{[j]}_{n+1} \, 
\begin{bmatrix} 
\left(\left(D^{[j]}\right)^{-1}\right)_{0,n} \\ \vdots \\ \left(\left(D^{[j]}\right)^{-1}\right)_{n,n} 
\end{bmatrix} 
=\begin{bmatrix} 0 \\ \vdots \\ 0 \\ 1 \end{bmatrix}
\quad\text{and}\quad
\left(\mathrm{M}^{[j]}_{n+1}\right)^\top\,
\begin{bmatrix} 
\left(\left(C^{[j]}\right)^{-1}\right)_{n,0} \\ \vdots \\  \left(\left(C^{[j]}\right)^{-1}\right)_{n,n} 
\end{bmatrix} 
=\begin{bmatrix} 0 \\ \vdots \\ 0 \\ \left(D^{[j]}\right)_{n,n} \end{bmatrix}.
\end{align}
Applying Cramer's rule, we obtain
\begin{equation}
\left(\left(D^{[j]}\right)^{-1}\right)_{n,n}=\dfrac{\Delta^{[j]}_n}{\Delta^{[j]}_{n+1}}
\quad\text{and}\quad
\left(\left(C^{[j]}\right)^{-1}\right)_{n,n-1}
=-\left(D^{[j]}\right)_{n,n}\,\dfrac{\widehat{\Delta}^{[j]}_{n+1}}{\Delta^{[j]}_{n+1}}.
\end{equation}
Taking into account that $\mathrm{D}^{[j]}$ and $\mathrm{C}^{[j]}$ are, respectively, an upper-triangular and a unit-lower-triangular matrix, we know that
$\dis\left(\left(D^{[j]}\right)^{-1}\right)_{n,n}=\left(D^{[j]}\right)_{n,n}^{-1}$
and
$\dis\left(\left(C^{[j]}\right)^{-1}\right)_{n,n-1}=-\left(C^{[j]}\right)_{n,n-1}$.
Therefore,
\begin{equation}
\left(D^{[j]}\right)_{n,n}=\dfrac{\Delta^{[j]}_{n+1}}{\Delta^{[j]}_n}
\quad\text{and}\quad
\left(C^{[j]}\right)_{n,n-1}=\dfrac{\widehat{\Delta}^{[j]}_{n+1}}{\Delta^{[j]}_n}.
\end{equation}
As a result, the identities \eqref{formulas for the alphas as ratios of 2-by-2 determinants} and \eqref{formulas for the alphas as diferences of 2-by-2 determinants} follow directly from \eqref{alpha (r+1)n} and \eqref{alpha (r+1)n+i, i from 1 to r}.

It is a direct consequence of \eqref{formulas for the alphas as ratios of 2-by-2 determinants} that $\Delta^{[j]}_n>0$ for all $n\in\N$ and $0\leq j\leq r$ implies $\alpha_m>0$ for all $m\in\N$.
Reciprocally, suppose that $\alpha_m>0$ for all $m\in\N$ and observe that \eqref{formulas for the alphas as ratios of 2-by-2 determinants} implies that, for any $n\in\N$,
\begin{align}
\Delta_{n+1}^{[r]}=\frac{\Delta_n^{[r]}\,\Delta_{n+1}^{[0]}}{\Delta_n^{[0]}}\,\alpha_{(r+1)n}
\quad\text{and}\quad
\Delta_{n+2}^{[i-1]}=\frac{\Delta_{n+1}^{[i-1]}\,\Delta_{n+1}^{[i]}}{\Delta_n^{[i]}}\,\alpha_{(r+1)n+i}
\quad\text{for }1\leq i\leq r.
\end{align}
Therefore, with the initial conditions $\dis\Delta_0^{[j]}=1$ for all $0\leq j\leq r$ and $\Delta^{[j-1]}_1=\Functional{v_j}{1}>0$ for all $1\leq j\leq r$, 
we show by induction that $\Delta^{[j]}_n>0$ for all $n\in\N$ and $0\leq j\leq r$.
\end{proof}

\section{Bidiagonal factorisations linked to branched continued fractions}
\label{Factorisations linked to BCF}
In this section, we construct bidiagonal factorisations of the form \eqref{bidiagonal matrix factorisation L_1...L_r U introduction} for the Hessenberg recurrence matrices associated with systems of multiple orthogonal polynomials via their connection with branched continued fractions.
In fact, we show that the nontrivial entries of the bidiagonal matrices are the coefficients of the corresponding branched continued fractions.

Firstly, we state the theorem that gives the foundation to our algorithm: we find the bidiagonal factorisation of the Hessenberg recurrence matrix of a system of multiple orthogonal polynomials by constructing a sequence of formal power series $\seq[m\in\N]{g_m(t)}$, determined by the moments of the orthogonality functionals (or measures) and a recurrence relation with coefficients equal to the nontrivial entries of the bidiagonal matrices in our factorisation.
We also observe that these nontrivial entries are the coefficients of a (Stieltjes-type) branched continued fraction and that they are the same as the nontrivial entries in the bidiagonal factorisations from the previous section.

Next, we give a necessary and sufficient condition on a system of multiple orthogonal polynomials for the existence of a bidiagonal factorisation of the form \eqref{bidiagonal matrix factorisation L_1...L_r U introduction} for the Hessenberg recurrence matrix of the system: the recurrence matrix of a polynomial sequence $\seq[n\in\N]{P_n(x)}$ admits such a bidiagonal factorisation if and only if it is $r$-orthogonal with respect to a system $\left(v_1,\cdots,v_r\right)$ such that the multiple orthogonal polynomials on the step-line with respect to all its Christoffel transformations $V^{[j]}$ in \eqref{systems V^[j]} exist and are unique.

Finally, we explicitly describe the sequence $\seq[m\in\N]{g_m(t)}$ used to construct our bidiagonal factorisations via the type I linear forms on the step-line with respect to the systems $V^{[j]}$ in \eqref{systems V^[j]}, establishing a so far missing link between branched continued fractions and type I multiple orthogonal polynomials.


The following theorem is the cornerstone for our method to construct bidiagonal factorisations via the connection of multiple orthogonal polynomials with branched continued fractions.
\begin{theorem}\label{Bidiagonal matrix factorisations for MOP and sequences of formal power series}
Let $\seq[n\in\N]{P_n(x)}$ be a polynomial sequence with complex coefficients and 
$\seq[m\in\N]{\alpha_m}$ be a sequence of nonzero complex numbers.
Then, the following conditions are equivalent:
\begin{enumerate}[label=(\alph*)]
\item 
$\seq[n\in\N]{P_n(x)}$ satisfies the recurrence relation 
\begin{align}
\label{recurrence relation r-OP matrix form}
x\left[P_n(x)\right]_{n\in\N}
=\mathrm{L}_1\cdots\mathrm{L}_r\,\mathrm{U}\left[P_n(x)\right]_{n\in\N},
\end{align}
involving the bidiagonal matrices $\mathrm{L}_1,\cdots,\mathrm{L}_r$, and $\mathrm{U}$ in \eqref{bidiagonal matrices U and L_k};

\item 
$\seq[n\in\N]{P_n(x)}$ is $r$-orthogonal with respect to the system of linear functionals $\left(v_1,\cdots,v_r\right)$ with moments
\begin{align}
\label{modified r-S.R. poly as moments of the orthogonality functionals}
\Functional{v_j}{x^n}=\modifiedStieltjesRogersPoly[r]{n}{j-1}{\boldsymbol{\alpha}}
\quad\text{for all }n\in\N\text{ and }1\leq j\leq r;
\end{align}

\item 
$\seq[n\in\N]{P_n(x)}$ is $r$-orthogonal with respect to a system of linear functionals $\left(v_1,\cdots,v_r\right)$ 
for which there exists a sequence of formal power series $\seq[m\in\N]{g_m(t)}$ such that
\begin{itemize}
\begin{subequations}
\item 
$g_0(t)=1$,
\hfill\refstepcounter{equation}\textup{(\theequation)}
\label{g_0(t)=1}

\item 
$\dis g_j(t)=\sum_{n=0}^{\infty}\Functional{v_j}{x^n}\,t^n$
for $1\leq j\leq r$,
\hfill\refstepcounter{equation}\textup{(\theequation)}
\label{g_j, j from 1 to r, as generating functions of the orthogonality functionals}

\item 
$\dis g_{m+1}(t)-g_m(t)=\alpha_m\,t\,g_{m+r+1}(t)$
for all $m\in\N$.
\hfill\refstepcounter{equation}\textup{(\theequation)}
\label{recurrence relation for the g_k}
\end{subequations}
\end{itemize}
The sequence $\seq[m\in\N]{g_m(t)}$ and the system $\left(v_1,\cdots,v_r\right)$ are uniquely determined by $\seq[m\in\N]{\alpha_m}$. 
\end{enumerate}
\end{theorem}
Note that, when the functionals $v_1,\cdots,v_r$ are induced by measures $\mu_1,\cdots,\mu_r$, 
\eqref{g_j, j from 1 to r, as generating functions of the orthogonality functionals} implies that the functions $\dfrac{1}{z}\,g_j\left(\dfrac{1}{z}\right)$, $1\leq j\leq r$, are the Stieltjes transforms of the measures $\mu_1,\cdots,\mu_r$, i.e.,
\begin{equation}
\label{Stieltjes transform of the measures}
\frac{1}{z}\,g_j\left(\frac{1}{z}\right)
=\sum_{n=0}^{\infty}\frac{1}{z^{n+1}}\,\int x^n\mathrm{d}\mu_j(x)
=\int\sum_{n=0}^{\infty}\frac{x^n}{z^{n+1}}\,\mathrm{d}\mu_j(x)
=\int\frac{\mathrm{d}\mu_j(x)}{z-x}
\quad\text{for all }1\leq j\leq r.
\end{equation}

To prove Theorem \ref{Bidiagonal matrix factorisations for MOP and sequences of formal power series}, we need to recall the  following method to construct Stieltjes-type branched continued fractions, introduced in \cite{AlanEtAl-LPandBCF1} as an generalisation of a method to construct classical S-fractions that goes back to the work of Euler and Gauss.
\begin{lemma}\label{Euler-Gauss method for BCF}
(cf. \cite[Prop.~2.3]{AlanEtAl-LPandBCF1})
For $r\in\Z^+$, let $\seq[m\in\N]{g_m(t)}$ be a sequence of formal power series with constant term $1$ and $f_k(t)={g_{k+1}(t)}\slash{g_k(t)}$ for all $k\in\N$.
Then, the following conditions are equivalent:
\begin{itemize}
\item 
$\seq[m\in\N]{g_m(t)}$ satisfies the recurrence relation \eqref{recurrence relation for the g_k},

\item 
$\seq[k\in\N]{f_k(t)}$ satisfies the relations in \eqref{functional equation for the generating function of r-Dyck paths}, so $f_k(t)$ is the generating function for $r$-Dyck paths at level $k$ with weights $\seq[m\in\N]{\alpha_m}$ and admits the Stieltjes-type $r$-branched-continued-fraction representation \eqref{branched cont. frac. of Stieltjes type},

\item 
the generating function of the modified $r$-Stieltjes-Rogers polynomials of type $j$ with weights $\seq[n\in\N]{\alpha_n}$ is
\begin{align}
\label{generating function of the modified m-S.-R. poly*}
\sum_{n=0}^{\infty}\modifiedStieltjesRogersPoly[r]{n}{j}{\boldsymbol{\alpha}}\,t^n
=f_0(t)\cdots f_j(t)=\frac{g_{j+1}(t)}{g_0(t)}
\quad\text{for all }j\in\N.
\end{align}
\end{itemize}
\end{lemma}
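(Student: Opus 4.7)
The plan is to establish the equivalences as a cycle, with the two non-trivial links being algebraic (between (1) and (2)) and combinatorial (between (2) and (3)). All manipulations live in the ring of formal power series in $t$, inside which each $g_k(t)$ is invertible because its constant term is $1$, so every ratio $f_k = g_{k+1}/g_k$ is well-defined.

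I would first prove (1) $\Leftrightarrow$ (2) purely algebraically. The key tool is the telescope
\begin{align*}
\frac{g_{k+r+1}(t)}{g_k(t)} = \prod_{j=0}^{r}\frac{g_{k+j+1}(t)}{g_{k+j}(t)} = \prod_{j=0}^{r} f_{k+j}(t).
\end{align*}
Dividing \eqref{recurrence relation for the g_k} by $g_k(t)$ and applying the telescope turns it into $f_k(t) = 1 + \alpha_k t \prod_{j=0}^{r} f_{k+j}(t)$, which is the first form in \eqref{functional equation for the generating function of r-Dyck paths}; multiplying by $g_k(t)$ reverses the step. The second (inverted) form and the branched continued fraction \eqref{branched cont. frac. of Stieltjes type} then follow by iterated substitution.

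Next I would prove (2) $\Rightarrow$ (3) by a last-passage decomposition of partial $r$-Dyck paths. Given a path from $(0,0)$ to $((r+1)n+j, j)$, let $t_i$ be the step index of its \emph{last} visit to height $i$, for $i = 0, 1, \ldots, j-1$. The prefix up to $t_0$ is an $r$-Dyck path at level $0$; the step from $t_i$ to $t_i+1$ must be a rise (otherwise a later visit to height $i$ would occur); the segment from $t_{i-1}+1$ to $t_i$ is an $r$-Dyck path at level $i$; and the tail after $t_{j-1}+1$ is an $r$-Dyck path at level $j$. This produces a bijection with tuples of $j+1$ Dyck paths at levels $0, 1, \ldots, j$ interleaved by $j$ rises. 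Since $t$ tracks only $r$-falls and each Dyck path at level $i$ has generating function $f_i(t)$, summing over $n$ yields $\sum_{n\in\N}\modifiedStieltjesRogersPoly[r]{n}{j}{\boldsymbol{\alpha}}\,t^n = f_0(t)\cdots f_j(t)$, and the telescope above delivers the second equality $= g_{j+1}(t)/g_0(t)$. The reverse implication (3) $\Rightarrow$ (2) follows by invoking the first-return decomposition of $r$-Dyck paths at level $k$ (cf.\ \cite[Eqs.~2.26--2.27]{AlanEtAl-LPandBCF1}), which gives \eqref{functional equation for the generating function of r-Dyck paths} directly from the combinatorial description of $f_k(t)$.

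The main obstacle, though ultimately a short argument, is the bijectivity check for the last-passage decomposition: one needs the cut points $t_0 < t_1 < \cdots < t_{j-1}$ to be well-defined and the induced steps at each $t_i$ to be forced to be rises, and the length accounting $(r+1)(n_0 + \cdots + n_j) + j = (r+1)n + j$ to balance. Everything else reduces to routine formal power series arithmetic.
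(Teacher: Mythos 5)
Your proof is correct, but note that the paper itself does not prove this lemma at all: it is imported verbatim from \cite[Prop.~2.3]{AlanEtAl-LPandBCF1}, so there is no in-paper argument to compare against. Your two links are exactly the standard ones from that source: the telescoping identity $g_{k+r+1}/g_k=\prod_{j=0}^{r}f_{k+j}$ turns \eqref{recurrence relation for the g_k} into \eqref{functional equation for the generating function of r-Dyck paths} and back, and the last-passage decomposition (with the forced rises after each last visit, justified correctly since an $r$-fall below height $i$ would require a later return to height $i$ by unit rises) gives $\sum_n \StieltjesRogersPoly[r;\,j]{n}{\boldsymbol{\alpha}}\,t^n=f_0\cdots f_j$. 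The only place you are terse is $(3)\Rightarrow(2)$: one should first conclude $f_k=F_k$ (the true level-$k$ generating function) by telescoping the hypothesis $f_0\cdots f_j=F_0\cdots F_j$ and cancelling invertible power series, and only then apply the first-return decomposition to $F_k$; this is a one-line addition, not a gap.
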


\begin{proof}[Proof of Theorem \ref{Bidiagonal matrix factorisations for MOP and sequences of formal power series}]
Accordingly to \cite[Prop.~8.2]{AlanEtAl-LPandBCF1}, the Hessenberg matrix $\mathrm{H}=\mathrm{L}_1\cdots\mathrm{L}_r\,\mathrm{U}$ is the production matrix of the matrix of generalised $r$-Stieltjes-Rogers polynomials of type $0$, $\seq[n,k\in\N]{\generalisedStieltjesRogersPolyTypeJ[r]{n}{k}{0}{\boldsymbol{\alpha}}}$.
As a result, the equivalence between \textit{(a)} and \textit{(b)} is a direct consequence of  \cite[Thm.~4.2\,\&\,Prop.~4.4]{MOPassociatedwithBCFforRatiosOfHypergeometricSeries}.

The equivalence between \textit{(b)} and \textit{(c)} follows from Lemma \ref{Euler-Gauss method for BCF}.
The sequence $\seq[m\in\N]{g_m(t)}$ and the system $\left(v_1,\cdots,v_r\right)$ in \textit{(c)} are uniquely determined by \eqref{generating function of the modified m-S.-R. poly*}, with $g_0(t)=1$, and \eqref{modified r-S.R. poly as moments of the orthogonality functionals}, respectively.
\end{proof}

As a consequence of Theorem \ref{Bidiagonal matrix factorisations for MOP and sequences of formal power series}, we can construct bidiagonal matrix factorisations for Hessenberg matrices associated with multiple orthogonal polynomials by finding sequences of formal power series $\seq[m\in\N]{g_m(t)}$ satisfying \eqref{g_0(t)=1}-\eqref{recurrence relation for the g_k} for systems of linear functionals $\left(v_1,\cdots,v_r\right)$.

Taking the derivative at $t=0$ on both sides of \eqref{recurrence relation for the g_k}, we find that if a sequence of power series $\seq[m\in\N]{g_m(t)}$ with constant term $1$ satisfies \eqref{recurrence relation for the g_k}, then
\begin{align}
\label{formula for the alphas from g_k}
\alpha_m=g_{m+1}^{\,\prime}(0)-g_m^{\,\prime}(0)
\quad\text{for all }m\in\N.
\end{align}
If $\alpha_m\neq 0$, we can invert \eqref{recurrence relation for the g_k} to obtain 
\begin{align}
\label{recurrence relation for the g_k inverted}
g_{m+r+1}(t)=\frac{1}{\alpha_m\,t}\Big(g_{m+1}(t)-g_m(t)\Big).
\end{align}

Therefore, starting from a system of linear functionals $\left(v_1,\cdots,v_r\right)$, with $\Functional{v_j}{1}=1$ for all $1\leq j\leq r$, we can construct the sequences $\seq[m\in\N]{g_m(t)}$ and $\seq[m\in\N]{\alpha_m}$ satisfying \eqref{g_0(t)=1}-\eqref{recurrence relation for the g_k} by setting $g_0(t)=1$, defining $g_1(t),\cdots,g_r(t)$ by \eqref{g_j, j from 1 to r, as generating functions of the orthogonality functionals}, and then recursively obtaining $\alpha_m$ and $g_{m+r+1}(t)$ for all $m\in\N$, 
using formulas \eqref{formula for the alphas from g_k} and \eqref{recurrence relation for the g_k inverted}, respectively, 
unless there exists $m\in\N$ such that $\alpha_m=0$.
Then, using Theorem \ref{Bidiagonal matrix factorisations for MOP and sequences of formal power series}, the nonzero coefficients $\alpha_k$ obtained correspond to nontrivial entries of the bidiagonal matrices appearing in the factorisation $\mathrm{H}=\mathrm{L}_1\cdots\mathrm{L}_r\,\mathrm{U}$ of the Hessenberg recurrence matrix of the type II multiple orthogonal polynomials on the step-line with respect to $\left(v_1,\cdots,v_r\right)$.

\begin{remark}\label{alphas are the same in Sections 3 and 4}
Let $\left(v_1,\cdots,v_r\right)$ be a system of linear functionals such that there exists a sequence of formal power series $\seq[m\in\N]{g_m(t)}$ satisfying \eqref{g_0(t)=1}-\eqref{recurrence relation for the g_k}.
Then, the nontrivial entries of the bidiagonal matrices appearing in the factorisation $\mathrm{H}=\mathrm{L}_1\cdots\mathrm{L}_r\,\mathrm{U}$ of the Hessenberg recurrence matrix for the system $\left(v_1,\cdots,v_r\right)$ are the coefficients of the branched continued fraction \eqref{branched cont. frac. of Stieltjes type} for $f_k(t)={g_{k+1}(t)}\slash{g_k(t)}$ for all $k\in\N$.

Moreover, the formulas in \eqref{formulas for the alphas as ratios of 2-by-2 determinants} were obtained in \cite[Cor.~4]{AlbenqueBouttier-Constellations},  within a completely combinatorial context where the $\seq[m\in\N]{\alpha_m}$ are the coefficients of a branched continued fraction equivalent to the one in \eqref{branched cont. frac. of Stieltjes type}.
Therefore, although the bidiagonal factorisation $\mathrm{H}=\mathrm{L}_1\cdots\mathrm{L}_r\,\mathrm{U}$ of a Hessenberg matrix is not unique when $r\geq 2$, the factorisations obtained for a system $\left(v_1,\cdots,v_r\right)$ via the methods developed in this section and in Section \ref{Factorisations using MOP coefficients} are the same.
\end{remark}

In the following result, we give a necessary and sufficient condition for the recurrence matrix of a polynomial sequence to admit a bidiagonal factorisation of the form \eqref{bidiagonal matrix factorisation L_1...L_r U introduction}.
\begin{theorem}\label{conditions for existence of bidiagonal matrix factorisation}
A polynomial sequence $\seq[n\in\N]{P_n^{[0]}(x)}$ satisfies a recurrence relation of the form
\begin{align}
\label{recurrence relation r-OP matrix form*}
x\left[P_n^{[0]}(x)\right]_{n\in\N}
=\mathrm{L}_1\cdots\mathrm{L}_r\,\mathrm{U}\left[P_n^{[0]}(x)\right]_{n\in\N},
\end{align}
involving bidiagonal matrices $\mathrm{L}_1,\cdots,\mathrm{L}_r$, and $\mathrm{U}$ of the form in \eqref{bidiagonal matrices U and L_k}, with all nontrivial entries $\seq[m\in\N]{\alpha_m}$ different from zero,
if and only if
$\seq[n\in\N]{P_n^{[0]}(x)}$ is $r$-orthogonal with respect to a system of linear functionals $V^{[0]}=\left(v_1,\cdots,v_r\right)$ such that the multiple orthogonal polynomials on the step-line with respect to all systems
\begin{align}
\label{systems V^[j], 0<=j<=r}
V^{[j]}=\left(v_{j+1},\cdots,v_r,x\,v_1,\cdots,x\,v_j\right),
\quad 0\leq j\leq r,
\end{align} 
introduced in \eqref{systems V^[j]}, exist and are unique.
\end{theorem}


To prove that the existence and uniqueness of the multiple orthogonal polynomials on the step-line with respect to all systems $V^{[j]}$, $0\leq j\leq r$, is a sufficient condition for $\seq[n\in\N]{P_n^{[0]}(x)}$ to satisfy a recurrence relation of the form \eqref{recurrence relation r-OP matrix form*},
we need to recall the definition and basic properties of $(r+1)$-fold symmetric $r$-orthogonal polynomials.

For $r\geq 1$, a polynomial sequence $\seq[n\in\N]{P_n(x)}$ is $(r+1)$-fold symmetric if
$\dis P_n\left(\e^{\frac{2\pi i}{r+1}}x\right)=\e^{\frac{2n\pi i}{r+1}}P_n(x)$ for all $n\in\N$.
This definition is equivalent to the existence of $r+1$ polynomial sequences $\dis\seq[n\in\N]{P_n^{[j]}(x)}$, $0\leq j\leq r$, 
which we refer to the as the components of the $(r+1)$-fold decomposition of $\seq[n\in\N]{P_n(x)}$,
such that
\begin{align}
\label{(r+1)-fold decomposition}
P_{(r+1)n+j}(x)=x^j\,P^{[j]}_n\left(x^{r+1}\right)
\quad\text{for all }n\in\N\text{ and }0\leq j\leq r.
\end{align}

A polynomial sequence $\seq[n\in\N]{P_n(x)}$ is $(r+1)$-fold symmetric and $r$-orthogonal if and only if
\begin{align}
\label{recurrence relation r+1 fold symmetric r-OPS}
P_{n+r+1}(x)=x\,P_{n+r}(x)-\gamma_n\,P_n(x)
\quad\text{for all }n\in\N,
\end{align}
with initial conditions $P_n(x)=x^n$ for $0\leq n\leq r$ and $\gamma_n\neq 0$ for all $n\in\N$.
When $r=1$, $(r+1)$-fold symmetric $r$-orthogonal polynomials reduce to symmetric orthogonal polynomials.
Well-known examples include the classical Hermite, Tchebyshev, Legendre, and Gegenbauer polynomials.

\begin{remark}
The Hessenberg recurrence matrix $\mathrm{H}$ of the $(r+1)$-fold symmetric $r$-orthogonal polynomial sequence satisfying the recurrence relation \eqref{recurrence relation r+1 fold symmetric r-OPS} only has nonzero entries in the supradiagonal, all equal to $1$, and in the $r^\text{th}$-subdiagonal, equal to the recurrence coefficients $\seq[n\in\N]{\gamma_n}$, i.e., 
\begin{align}
\label{Hessenberg matrix r+1-fold sym r-OP}
\mathrm{H}=
\begin{bmatrix}
	 0 & 1 \\
\vdots & \ddots & \ddots \\
	 0 & \cdots & 0 & 1 \\ 
\gamma_0 & 0 & \cdots & 0 & 1 \\
& \gamma_1 & 0 & \cdots & 0 & 1 \\
& & \ddots & \ddots & & \ddots & \ddots
\end{bmatrix}.
\end{align}
If the Hessenberg matrix in \eqref{Hessenberg matrix r+1-fold sym r-OP} admits a factorisation $\mathrm{H}=\mathrm{L}_1\cdots\mathrm{L}_r\,\mathrm{U}$ as a product of bidiagonal matrices of the form in \eqref{bidiagonal matrices U and L_k}, 
then $\alpha_0=0$ and $\alpha_0\alpha_r\cdots\alpha_{r^2}=\gamma_0\neq 0$,
which is clearly impossible.	
Therefore, $(r+1)$-fold symmetric $r$-orthogonal polynomial sequences provide an example of Hessenberg recurrence matrices which do not admit a bidiagonal matrix factorisation as in \eqref{bidiagonal matrix factorisation L_1...L_r U introduction}.
The spectral properties of Hessenberg matrices of the form in \eqref{Hessenberg matrix r+1-fold sym r-OP}, with positive and bounded coefficients $\seq[n\in\N]{\gamma_n}$, were investigated in \cite{GeneticSumsAptekarevEtAl}.
\end{remark}

\begin{proof}[Proof of Theorem \ref{conditions for existence of bidiagonal matrix factorisation}]
Suppose that 
$\seq[n\in\N]{P_n^{[0]}(x)}$ is $r$-orthogonal with respect to $V^{[0]}=\left(v_1,\cdots,v_r\right)$ such that
the multiple orthogonal polynomials on the step-line with respect to all systems
$V^{[j]}$, $0\leq j\leq r$, in \eqref{systems V^[j], 0<=j<=r} exist and are unique.
Recalling Theorem \ref{bidiagonal fatorisations using the Gauss-Borel fatorisation thm.}, we know that the Hessenberg recurrence matrices associated with the systems $V^{[j]}$ admit the bidiagonal factorisations in \eqref{eq:bidiagonal_j},
with $\alpha_m\neq 0$ for all $m\in\N$. 
In particular, $\seq[n\in\N]{P_n^{[0]}(x)}$ satisfies \eqref{recurrence relation r-OP matrix form*}.

Reciprocally, suppose that $\seq[n\in\N]{P_n^{[0]}(x)}$ satisfies \eqref{recurrence relation r-OP matrix form*}, 
with $\alpha_m\neq 0$ for all $m\in\N$.
Recalling Theorem \ref{Bidiagonal matrix factorisations for MOP and sequences of formal power series}, $\seq[n\in\N]{P_n^{[0]}(x)}$ is $r$-orthogonal with respect to a system of linear functionals $\left(v_1,\cdots,v_r\right)$ for which there exists a sequence of formal power series $\seq[m\in\N]{g_m(t)}$ satisfying \eqref{g_0(t)=1}-\eqref{recurrence relation for the g_k} with the same nonzero coefficients $\seq[m\in\N]{\alpha_m}$ appearing in \eqref{recurrence relation r-OP matrix form*}.
Let $\seq[n\in\N]{P_n(x)}$ be the $(r+1)$-fold symmetric $r$-orthogonal polynomial sequence satisfying the recurrence relation 
\begin{align}
\label{rec. rel. (r+1)-fold sym r-OPS}
P_{n+r+1}(x)=x\,P_{n+r}(x)-\alpha_n\,P_n(x)
\quad\text{for all }n\in\N,
\end{align}
with initial conditions $P_j(x)=x^j$ for $0\leq j\leq r$, 
and let $\seq[n\in\N]{P_n^{[j]}}$, $0\leq j\leq r$, be the components of the $(r+1)$-fold decomposition \eqref{(r+1)-fold decomposition} of $\seq[n\in\N]{P_n(x)}$.
Accordingly to \cite[Thm.~4.7]{BidiagonalMatrixFact.SymMOPandLP}, the sequences $\seq[n\in\N]{P_n^{[j]}}$ are $r$-orthogonal with respect to the systems $V^{[j]}$ for all $0\leq j\leq r$.
As a result, we know that the multiple orthogonal polynomials on the step-line with respect to all these systems exist and are unique.
\end{proof}

In the following result, we explicitly describe the sequence $\seq[m\in\N]{g_m(t)}$ introduced in Theorem \ref{Bidiagonal matrix factorisations for MOP and sequences of formal power series}, using the moments of the type I linear forms on the step-line with respect to the systems $V^{[j]}$ in \eqref{systems V^[j], 0<=j<=r}.
\begin{proposition}\label{description of g_k using type I linear forms}
For $r\in\Z^+$ and a system $\left(v_1,\cdots,v_r\right)$ of linear functionals, with $\Functional{v_j}{1}=1$ for all $1\leq j\leq r$, for which the multiple orthogonal polynomials on the step-line with respect to all systems $V^{[j]}$ in \eqref{systems V^[j], 0<=j<=r} exist and are unique, let $\seq[k\in\N]{q_k^{[j]}}$, $0\leq j\leq r$, be the sequences of type I linear forms on the step-line with respect to the systems $V^{[j]}$.
Then, the sequence of formal power series $\seq[m\in\N]{g_m(t)}$ 
satisfying \eqref{g_0(t)=1}-\eqref{recurrence relation for the g_k} is given by 
\begin{align}
\label{g_k as generating functions of moments of type I linear forms on the step-line}
g_0(t)=1
\quad\text{and}\quad
g_{(r+1)k+j+1}(t)=\sum_{n=0}^{\infty}\Functional{q_k^{[j]}}{x^{k+n}}\,t^n
\quad\text{for all }k\in\N\text{ and }0\leq j\leq r.
\end{align}
\end{proposition}

As mentioned in the introduction, when there exist a measure $\mu$ and weight functions $w_1(x),\cdots,w_r(x)$ defined in the support of $\mu$ such that $\dis\Functional{v_j}{f}=\int f(x)w_j(x)\mathrm{d}\mu(x)$ for any $f\in\C[x]$ and any $1\leq j\leq r$, 
we can define the type I functions $\dis Q_k^{[j]}(x)=\sum_{j=1}^{r}A_{\vec{n},j}(x)\,w_j(x)$ 
and the type I linear forms $q_k^{[j]}$ have the integral representation
$\dis\Functional{q_k^{[j]}}{f}=\int f(x)Q_k^{[j]}(x)\mathrm{d}\mu(x)$ for any $f\in\C[x]$.
In that case, 
\eqref{g_k as generating functions of moments of type I linear forms on the step-line} implies the following integral representation for the functions $g_m(t)$, $m\geq 1$, using the Stieltjes transform of the type I functions $Q_k^{[j]}(x)$, valid for all $k\in\N$ and $0\leq j\leq r$:
\begin{equation}
\label{Stieltjes transform of the type I functions}
\frac{1}{z}\,g_{(r+1)k+j+1}\left(\frac{1}{z}\right)
=\sum_{n=0}^{\infty}\frac{1}{z^{n+1}}\,\int x^{k+n}\,Q_k^{[j]}(x)\mathrm{d}\mu(x)
=\int\frac{x^k\,Q_k^{[j]}(x)\mathrm{d}\mu_j(x)}{z-x}.
\end{equation}

We can use Proposition \ref{description of g_k using type I linear forms} to obtain a combinatorial interpretation for the moments of the type I linear forms on the step-line with respect to the systems $V^{[j]}$ as generating polynomials of partial $r$-Dyck paths, establishing a link of type I multiple orthogonal polynomials with lattice paths and branched continued fractions.
Comparing \eqref{g_k as generating functions of moments of type I linear forms on the step-line} and \eqref{generating function of the modified m-S.-R. poly*}, we find that  
\begin{align}
\Functional{q_k^{[j]}}{x^{k+n}}
=\modifiedStieltjesRogersPoly[r]{n}{(r+1)k+j}{\boldsymbol{\alpha}}
=\generalisedStieltjesRogersPolyTypeJ[r]{n+k}{k}{j}{\boldsymbol{\alpha}}
\quad\text{for all }k,n\in\N\text{ and }0\leq j\leq r.
\end{align}
The second equality holds because, by definition of the modified and generalised $r$-Stieltjes-Rogers polynomials, 
both $\modifiedStieltjesRogersPoly[r]{n}{(r+1)k+j}{\boldsymbol{\alpha}}$ and $\generalisedStieltjesRogersPolyTypeJ[r]{n+k}{k}{j}{\boldsymbol{\alpha}}$
are both equal to the generating polynomial of the partial $r$-Dyck paths from $(0,0)$ to $\big((r+1)(n+k)+j,(r+1)k+j\big)$.
In addition, we have $\Functional{q_k^{[j]}}{x^m}=0=\generalisedStieltjesRogersPolyTypeJ[r]{m}{k}{j}{\boldsymbol{\alpha}}$ whenever $m<k$.
Therefore, we deduce that
\begin{align}
\label{combinatorial formulas for the moments of the type I linear forms on the step-line}
\Functional{q_k^{[j]}}{x^m}=\generalisedStieltjesRogersPolyTypeJ[r]{m}{k}{j}{\boldsymbol{\alpha}}
\quad\text{for all }k,m\in\N\text{ and }0\leq j\leq r.
\end{align}
This formula was given in \cite[Prop.~4.3]{BidiagonalMatrixFact.SymMOPandLP} without any mention of type I multiple orthogonality (in \cite{BidiagonalMatrixFact.SymMOPandLP}, the sequences $\seq[k\in\N]{q_k^{[j]}}$, $0\leq j\leq r$, appear as the dual sequences of the $r$-orthogonal polynomials with respect to the systems $V^{[j]}$).

\begin{proof}[Proof of Proposition \ref{description of g_k using type I linear forms}]
It is trivial that $\seq[m\in\N]{g_m(t)}$ defined by \eqref{g_k as generating functions of moments of type I linear forms on the step-line} satisfies \eqref{g_0(t)=1}.

By definition, $q_0^{[j]}=v_{j+1}$ for all $0\leq j\leq r-1$.
As a result, $\seq[m\in\N]{g_m(t)}$ also satisfies \eqref{g_j, j from 1 to r, as generating functions of the orthogonality functionals}.

Now we need to prove that $\seq[m\in\N]{g_m(t)}$ satisfies \eqref{recurrence relation for the g_k}.
Let $\seq[n\in\N]{P_n(x)}$ be the $(r+1)$-fold symmetric $r$-orthogonal polynomial sequence satisfying the recurrence relation \eqref{rec. rel. (r+1)-fold sym r-OPS}
and $\seq[k\in\N]{q_k}$ be the sequence of type I linear forms on the step-line dual to $\seq[n\in\N]{P_n(x)}$.
Accordingly to \cite[Prop.~4.3]{BidiagonalMatrixFact.SymMOPandLP}:
\begin{itemize}
\item 
$\Functional{q_k}{x^n}=0$ if $n-k$ is not a multiple of $r+1$, 

\item 
$\Functional{q_{(r+1)m+j}}{x^{(r+1)k+j}}=\Functional{q_m^{[j]}}{x^k}
=\generalisedStieltjesRogersPolyTypeJ[r]{m}{k}{j}{\boldsymbol{\alpha}}$ 
for all $m,k\in\N$ and $0\leq j\leq r$.
\end{itemize}
Therefore, \eqref{g_k as generating functions of moments of type I linear forms on the step-line} is equivalent to
\begin{align}
\label{g_k as generating functions of moments of type I linear forms w.r.t. (r+1)-fold sym functionals}
g_{k+1}(t)=\sum_{n=0}^{\infty}\Functional{q_k}{x^{(r+1)n+k}}\,t^n
\quad\text{for all }k\in\N.
\end{align}

The recurrence relation \eqref{rec. rel. (r+1)-fold sym r-OPS} satisfied by $\seq[n\in\N]{P_n(x)}$ can be expressed as
$\dis x\left[P_n(x)\right]_{n\in\N}=\mathrm{H}\left[P_n(x)\right]_{n\in\N}$, where $\mathrm{H}$ is the Hessenberg matrix in \eqref{Hessenberg matrix r+1-fold sym r-OP} with $\gamma_n=\alpha_n$ for all $n\in\N$.
Then, $\seq[k\in\N]{q_k}$ satisfies the dual recurrence relation 
$\dis x\left[q_k\right]_{k\in\N}=\mathrm{H}^\top\left[q_k\right]_{k\in\N}$, which can be explicitly written as
\begin{align}
x\,q_0=\alpha_0\,q_r
\quad\text{and}\quad
x\,q_k-q_{k-1}=\alpha_k\,q_{k+r}
\quad\text{for all }k\geq 1.
\end{align}

As a result, we find that
\begin{align}
g_1(t)-g_0(t)
=\sum_{n=0}^{\infty}\Functional{q_0}{x^{(r+1)(n+1)}}\,t^{n+1}
=\sum_{n=0}^{\infty}\alpha_0\,\Functional{q_r}{x^{(r+1)n+r}}\,t^{n+1}
=\alpha_0\,t\,g_{r+1}(t),
\end{align}
and
\begin{equation}
\begin{aligned}
g_{k+1}(t)-g_k(t)
&
=\sum_{n=0}^{\infty}\bigg(\Functional{q_k}{x^{(r+1)(n+1)+k+1}}-\Functional{q_{k-1}}{x^{(r+1)(n+1)+k}}\bigg)\,t^{n+1}
\\&
=\sum_{n=0}^{\infty}\alpha_k\,\Functional{q_{k+r}}{x^{(r+1)(n+1)+k}}\,t^{n+1}
=\alpha_k\,t\,g_{k+r+1}(t)
\quad\text{for all }k\geq 1.
\end{aligned}
\end{equation}
Hence, $\seq[m\in\N]{g_m(t)}$ defined by \eqref{g_k as generating functions of moments of type I linear forms on the step-line} or, equivalently, by \eqref{g_k as generating functions of moments of type I linear forms w.r.t. (r+1)-fold sym functionals} satisfies \eqref{recurrence relation for the g_k}.
\end{proof}

\section{Case study: Jacobi-Pi\~neiro polynomials}
\label{Jacobi-Pineiro section}

In this section, we present a bidiagonal factorisation of the Hessenberg recurrence matrix of the Jacobi-Pi\~neiro multiple orthogonal polynomials on the step-line,
obtained using both methods developed in Sections \ref{Factorisations using MOP coefficients} and \ref{Factorisations linked to BCF}.
At the end of the section, we give a bidiagonal matrix factorisation associated with the multiple Laguerre polynomials of first kind as a limiting case of the Jacobi-Pi\~neiro polynomials.

For a positive integer $r$ and parameters $b>-1$ and $a_1,\cdots,a_r>0$ such that $i\neq j$ implies $a_i-a_j\not\in\Z$, we consider the measures $\left(\mu_1,\cdots,\mu_r\right)$ supported on the interval $(0,1)$ with densities $\mathrm{d}\mu_j(x)=x^{a_j-1}(1-x)^b\,\mathrm{d}x$, whose moments are
\begin{align}
\label{Jacobi measures moments}
\int_{0}^{1}x^n\mathrm{d}\mu_j(x)
=\int_{0}^{1}x^{a_j+n-1}(1-x)^b\,\mathrm{d}x
=\mathrm{B}(a_j+n,b+1)
\quad\text{for all }n\in\N\text{ and }1\leq j\leq r,
\end{align}
where $\mathrm{B}\left(x,y\right)$ is the two-variable Beta function defined by
\begin{align}
\mathrm{B}(x,y)
=\int_{0}^{1}t^{x-1}(1-t)^{y-1}\mathrm{d}t
=\frac{\Gamma(x)\Gamma(y)}{\Gamma(x+y)},
\end{align}
and $\Gamma(z)$ is Euler's Gamma function.
The multiple orthogonal polynomials with respect to $\left(\mu_1,\cdots,\mu_r\right)$ are the well-known Jacobi-Pi\~neiro polynomials introduced in \cite{Pineiro}. 

Explicit expressions for the type I and type II Jacobi-Pi\~neiro polynomials with respect to an arbitrary number of weights can be found in \cite{zbMATH07872144} and \cite{zbMATH02165380}, respectively. 
These expressions involve hypergeometric series.
For $p,q\in\N$, the hypergeometric series ${}_pF_q$ (see \cite{AndrewsAskeyRoySpecialFunctions,LukeSpecialFunctionsVolI,DLMF}) is defined by
\begin{equation}
\label{generalised hypergeometric series}
\Hypergeometric[z]{p}{q}{a_1,\cdots,a_p}{b_1,\cdots,b_q}
=\sum_{n=0}^{\infty}\frac{\pochhammer{a_1}\cdots\pochhammer{a_p}} {\pochhammer{b_1}\cdots\pochhammer{b_q}}\,\frac{z^n}{n!}.
\end{equation}
where $\pochhammer{z}$ is the Pochhammer symbol defined by 
$\pochhammer[0]{z}=1$ and $\pochhammer{z}=z(z+1)\cdots(z+n-1)$ for $n\geq 1$.

When $p\geq 1$, $a_1=-n$ for some $n\in\N$, $a_2,\cdots,a_p\not\in\{0,-1,\cdots,1-n\}$ and $b_1,\cdots,b_q\not\in\{0,-1,\cdots,-n\}$, \eqref{generalised hypergeometric series} is a terminating series and defines a polynomial of degree $n$.
Otherwise, we require that $b_1,\cdots,b_q\not\in-\N$ and we treat \eqref{generalised hypergeometric series} as a formal power series, without concerning about its convergence.

\subsection{Bidiagonal factorisation linked to branched continued fractions}
\hspace*{\fill}

To construct a bidiagonal matrix factorisation of the Hessenberg recurrence matrix of the Jacobi-Pi\~neiro polynomials on the step-line via their connection with branched continued fractions, we need to find the sequence $\seq[m\in\N]{g_m(t)}$ satisfying \eqref{g_0(t)=1}-\eqref{recurrence relation for the g_k}, where $v_1,\cdots,v_r$ are the functionals with the moments in \eqref{Jacobi measures moments} normalised so that $\Functional{v_j}{1}=1$ for all $1\leq j\leq r$, i.e.,
\begin{align}
\label{Jacobi measures moments normalised}
\Functional{v_j}{x^n}
=\int_{0}^{1}\frac{x^{a_j+n-1}(1-x)^b\,\mathrm{d}x}{\mathrm{B}\left(a_j+1,b+1\right)}
=\frac{\mathrm{B}\left(a_j+n,b+1\right)}{\mathrm{B}\left(a_j,b+1\right)}
=\frac{\pochhammer{a_j}}{\pochhammer{a_j+b+1}}
\quad\text{for all }n\in\N\text{ and }1\leq j\leq r.
\end{align}
Note that, defining $v_1,\cdots,v_r$ by its moments in \eqref{Jacobi measures moments normalised}, we can replace the conditions $a_1,\cdots,a_r>0$ and $b>-1$ by the weaker condition that $a_1,\cdots,a_r,a_1+b+1,\cdots,a_r+b+1$ cannot be non-positive integers.

\begin{theorem}
\label{functions g_k for Jacobi-Pineiro poly - thm.}
For $a_1,\cdots,a_r\in\C$ such that $a_1,\cdots,a_r,a_1+b+1,\cdots,a_r+b+1$ cannot be non-positive integers, let $\seq[n\in\N]{g_n(t)}$ be the sequence defined by
\begin{align}
\label{seq. g_n of hypergeometric series for J.P. poly}
g_n(t)
=(1-t)^{b}\,
\Hypergeometric[t]{r+1}{r}{b+n_0^{\,\prime},a_1+b+n_1^{\,\prime},\cdots,a_r+b+n_r^{\,\prime}\vspace*{2 pt}} {a_1+b+n_1,\cdots,a_r+b+n_r}
\quad\text{for all }n\in\N,
\end{align}
where $\vec{n}=\left(n_1,\cdots,n_r\right)$ and $\vec{n}^{\,\prime}=\left(n_0^{\,\prime},\cdots,n_r^{\,\prime}\right)$ are the elements of the step-line of $\N^r$ and $\N^{r+1}$, respectively, such that $\left|\vec{n}\right|=\left|\vec{n}^{\,\prime}\right|=n$,
i.e.,
$\dis n_j=\ceil{\frac{{n-j+1}}{r}}$ for all $1\leq j\leq r$
and
$\dis n_i^{\,\prime}=\ceil{\frac{{n-i}}{r+1}}$ for all $0\leq i\leq r$.
Then:
\begin{itemize}[leftmargin=*]
\item 
the initial elements of $\seq[n\in\N]{g_n(t)}$ are $g_0(t)=1$ and
\begin{align}
\label{g_1 to g_r, Jacobi-Pineiro}
g_n(t)
=\sum_{k=0}^{\infty}\frac{\pochhammer[k]{a_n}}{\pochhammer[k]{a_n+b+1}}\,t^k
\quad\text{for }1\leq n\leq r.
\end{align}

\item 
$\seq[n\in\N]{g_n(t)}$ satisfies the recurrence relation 
\begin{align}
\label{rec. rel. g_k J.-P. poly}
g_{n+1}(t)-g_n(t)=\alpha_n\,t\,g_{n+r+1}(t)
\quad\text{for all }n\in\N,
\end{align}
with coefficients
\begin{align}
\label{bidiagonal matrix entries Jacobi-Pineiro via BCF}
\alpha_n
=\frac{\left(a_{j(n)}-a_{i(n)}+\floor{\dfrac{n}{r}}-\floor{\dfrac{n}{r+1}}\right) \prod\limits_{i=0}^{i(n)-1}\left(a_i+b+\floor{\dfrac{n}{r+1}}+1\right) \prod\limits_{i=i(n)+1}^{r}\left(a_i+b+\floor{\dfrac{n}{r+1}}\right)}
{\prod\limits_{j=1}^{j(n)}\left(a_j+b+\floor{\dfrac{n}{r}}+1\right) \prod\limits_{j=j(n)}^{r}\left(a_j+b+\floor{\dfrac{n}{r}}\right)},
\end{align}
for all $n\in\N$, with $a_0=0$, $i(n)=[n]_{r+1}$, and $j(n)=[n]_r+1$, where $[n]_k$ denotes the remainder of the division of $n$ by $k$.
\end{itemize}
\end{theorem}

Combining the result above with Theorem \ref{Bidiagonal matrix factorisations for MOP and sequences of formal power series}, we immediately obtain the bidiagonal factorisation of the Hessenberg recurrence matrix of the Jacobi-Pi\~neiro polynomials on the step-line as follows.
\begin{corollary}
\label{bidiagonal factorisation Jacobi-Pineiro poly}
The Hessenberg recurrence matrix of the Jacobi-Pi\~neiro polynomials on the step-line, multiple orthogonal with respect to $(\mu_1,\cdots,\mu_r)$ given by \eqref{Jacobi measures moments}, admits the factorisation $\mathrm{H}=\mathrm{L}_1\cdots\mathrm{L}_r\,\mathrm{U}$, as a product of bidiagonal matrices of the form in \eqref{bidiagonal matrices U and L_k} with nontrivial entries $\seq[n\in\N]{\alpha_n}$ given by \eqref{bidiagonal matrix entries Jacobi-Pineiro via BCF}.
\end{corollary}

The case $b=0$ is a particular case of the bidiagonal matrix factorisation for the multiple orthogonal polynomials with hypergeometric moment generating functions obtained in \cite{MOPassociatedwithBCFforRatiosOfHypergeometricSeries}.

To prove the recurrence relation \eqref{rec. rel. g_k J.-P. poly}, we need the following relation for contiguous hypergeometric series, which can be found in \cite[Lemma~14.1]{AlanEtAl-LPandBCF1}.
\begin{lemma}
\label{contiguous relation for hypergeometric series}
For $p,q\in\N$, $\vec{c}=\left(c_1,\cdots,c_p\right)\in\N^p$, and $\vec{d}=\left(d_1,\cdots,d_q\right)\in\N^q$,
\begin{align}
\Hypergeometric[t]{p}{q}{\vec{c}+\vec{e}_i}{\vec{d}+\vec{e}_j} 
-\Hypergeometric[t]{p}{q}{\vec{c}}{\vec{d}}
=\,\dfrac{\left(d_j-c_i\right)\prod\limits_{k=1,\,k\neq i}^{p}c_k}{\left(d_j+1\right)\prod\limits_{l=1}^{q}d_l}
\;t\,\Hypergeometric[t]{p}{q}{\vec{c}+1}{\vec{d}+1+\vec{e}_j},
\end{align}
where $\vec{e}_i\in\N^p$ and $\vec{e}_j\in\N^q$ are the vectors whose entries are all $0$ except the $i^{\textrm{th}}$-entry, respectively $j^{\textrm{th}}$-entry, which is equal to $1$.
\end{lemma}
This formula can be easily proved by comparing the coefficients of $t^n$ on both sides.

\begin{proof}[Proof of Theorem \ref{functions g_k for Jacobi-Pineiro poly - thm.}]
For $n=0$, 
$n_0^{\,\prime}=0$ and $n_i^{\,\prime}=n_i=0$ for all $1\leq i\leq r$.
Hence, \eqref{seq. g_n of hypergeometric series for J.P. poly} reduces to
\begin{align}
g_0(t)=(1-t)^b\Hypergeometric[t]{1}{0}{b}{-}=1.
\end{align}

For $1\leq n\leq r$, 
$n_0^{\,\prime}=1$, 
$n_i^{\,\prime}=n_i=1$ if $1\leq i\leq n-1$, 
$n_n^{\,\prime}=0$, $n_n=1$, 
and $n_i^{\,\prime}=n_i=0$ if $n+1\leq i\leq r$.
As a result, \eqref{seq. g_n of hypergeometric series for J.P. poly} reduces to
\begin{align}
\label{g_1 to g_r, Jacobi-Pineiro*}
g_n(t)
=(1-t)^b\Hypergeometric[t]{2}{1}{b+1,a_n+b}{a_n+b+1}
=\Hypergeometric[t]{2}{1}{a_n,1}{a_n+b+1}
=\sum_{k=0}^{\infty}\frac{\pochhammer[k]{a_n}}{\pochhammer[k]{a_n+b+1}}\,t^k.
\end{align}
Here, the second equality is a particular case of \cite[Eq.~15.8.1]{DLMF} 
and the final equality follows from the definition of the hypergeometric series.

Using the vectors 
$\vec{n}=\left(n_1,\cdots,n_r\right)\in\N^r$ 
and
$\vec{n}^{\,\prime}=\left(n_0^{\,\prime},\cdots,n_r^{\,\prime}\right)\in\N^{r+1}$ 
defined below \eqref{seq. g_n of hypergeometric series for J.P. poly} 
and introducing the vectors
$\vec{a}=\left(a_1,\cdots,a_r\right)\in\C^r$ 
and 
$\vec{a}^{\,\prime}=\left(0,a_1,\cdots,a_r\right)\in\C^{r+1}$, 
we can rewrite \eqref{seq. g_n of hypergeometric series for J.P. poly} as
\begin{align}
g_n(t)
=(1-t)^{b}
\Hypergeometric[t]{r+1}{r}{\vec{a}^{\,\prime}+b+\vec{n}^{\,\prime}\vspace*{2 pt}} {\vec{a}+b+\vec{n}}
\quad\text{for all }n\in\N.
\end{align}

Fix $n\in\N$.
To find the recurrence relation \eqref{rec. rel. g_k J.-P. poly}, we start by observing that
\begin{align}
\overrightarrow{n+1}=\vec{n}+\vec{e}_{j(n)}
\quad\text{and}\quad 
\overrightarrow{n+1}^{\,\prime}=\vec{n}^{\,\prime}+\vec{e}_{i(n)}^{\,\prime},
\end{align}
where 
$\vec{e}_{j(n)}=\left(e_{j(n),1},\cdots,e_{j(n),r}\right)$ 
and
$\vec{e}_{i(n)}^{\,\prime}=\left(e_{i(n),0}^{\,\prime},\cdots,e_{i(n),r}^{\,\prime}\right)$ are vectors whose entries are all equal to $0$, except $e_{i(n),i(n)}^{\,\prime}=1=e_{j(n),j(n)}$.
Therefore, we have 
\begin{align}
g_{n+1}(t)-g_n(t)
=(1-t)^{b}
\left(
\Hypergeometric[t]{r+1}{r}{\vec{a}^{\,\prime}+b+\vec{n}^{\,\prime}+\vec{e}_{i(n)}^{\,\prime}\vspace*{2 pt}} {\vec{a}+b+\vec{n}+\vec{e}_{j(n)}}
-
\Hypergeometric[t]{r+1}{r}{\vec{a}^{\,\prime}+b+\vec{n}^{\,\prime}\vspace*{2 pt}} {\vec{a}+b+\vec{n}}
\right).
\end{align}

Using Lemma \ref{contiguous relation for hypergeometric series},
\begin{equation}
\Hypergeometric[t]{r+1}{r}{\vec{a}^{\,\prime}+b+\vec{n}^{\,\prime}+\vec{e}_{i(n)}^{\,\prime}\vspace*{2 pt}} {\vec{a}+b+\vec{n}+\vec{e}_{j(n)}}
-
\Hypergeometric[t]{r+1}{r}{\vec{a}^{\,\prime}+b+\vec{n}^{\,\prime}\vspace*{2 pt}} {\vec{a}+b+\vec{n}}
=\alpha_n\,t\,\Hypergeometric[t]{r+1}{r}{\vec{a}^{\,\prime}+b+\vec{n}^{\,\prime}+1\vspace*{2 pt}} {\vec{a}+b+\vec{n}+\vec{e}_{j(n)}+1},
\end{equation}
with
\begin{align}
\label{bidiagonal matrix entries Jacobi-Pineiro via BCF*}
\alpha_n
=\dfrac{\left(a_{j(n)}-a_{i(n)}+n_{j(n)}-n_{i(n)}^{\,\prime}\right)
\prod\limits_{i=0,\,i\neq i(n)}^{r}\left(a_i+b+n_i^{\,\prime}\right)} {\left(a_{j(n)}+b+n_{j(n)}+1\right)\prod\limits_{j=1}^{r}\left(a_j+b+n_j\right)}.
\end{align}

Observe that $\vec{n}^{\,\prime}+1=\overrightarrow{n+r+1}^{\,\prime}$ and $\vec{n}+\vec{e}_{j(n)}+1=\overrightarrow{n+1}+1 =\overrightarrow{n+r+1}$.
Hence,
\begin{align}
g_{n+1}(t)-g_n(t)
=\alpha_n\,t\,(1-t)^{b}\,
\Hypergeometric[t]{r+1}{r}{\vec{a}^{\,\prime}+b+\overrightarrow{n+r+1}^{\,\prime}\vspace*{2 pt}} {\vec{a}+b+\overrightarrow{n+r+1}}
=\alpha_n\,t\,g_{n+r+1}(t).
\end{align}

Finally, note that 
\begin{align}
n_j=
\begin{cases}
	\floor{\frac{{n}}{r}}+1 & \text{if } j\leq j(n)-1, \vspace*{2 pt} \\
\hfil \floor{\frac{{n}}{r}} & \text{if } j\geq j(n),
\end{cases}
\quad\text{and}\quad
n_i^{\,\prime}=
\begin{cases}
	\floor{\frac{{n}}{r+1}}+1 & \text{if } i\leq i(n)-1, \vspace*{2 pt} \\
\hfil \floor{\frac{{n}}{r+1}} & \text{if } i\geq i(n).
\end{cases}
\end{align}
Therefore, \eqref{bidiagonal matrix entries Jacobi-Pineiro via BCF*} is equivalent to \eqref{bidiagonal matrix entries Jacobi-Pineiro via BCF}, concluding our proof. 
\end{proof}

\subsection{Bidiagonal factorisation using the explicit expression for the type I polynomials}
\hspace*{\fill}

Here, we find again the bidiagonal factorisation described in Corollary \ref{bidiagonal factorisation Jacobi-Pineiro poly} for the Hessenberg recurrence matrix of the Jacobi-Pi\~neiro polynomials, now using explicit expressions for the type I polynomials recently given in \cite{zbMATH07872144}.
We consider the system of measures $\left(\mu_1,\cdots,\mu_r\right)$ supported on the interval $(0,1)$ with densities $\mathrm{d}\mu_j(x)=x^{a_j-1}(1-x)^b\,\mathrm{d}x$, with $b>-1$ and $a_1,\cdots,a_r>0$ such that $i\neq j$ implies $a_i-a_j\not\in\Z$.
Then, for any $1\leq j\leq r$, we have:
\begin{itemize}
\item
$\dis \mu_j\left(a_1,\cdots,a_r;b\right)=\mu_1\left(a_j,\cdots,a_r,a_1+1,\cdots,a_{j-1}+1;b\right)$,

\item
$\dis x\,\mu_j\left(a_1,\cdots,a_r;b\right)=\mu_j\left(a_1+1,\cdots,a_r+1;b\right)=
\mu_1\left(a_j+1,\cdots,a_r+1,a_1+2,\cdots,a_{j-1}+2;b\right)$.
\end{itemize}
Therefore, the multiple orthogonal polynomials with respect to the system 
$\dis\left(\mu_{i+1},\cdots,\mu_r,x\,\mu_1,\cdots,x\,\mu_i\right)$, $0\leq i\leq r$,
are the Jacobi-Pi\~neiro polynomials with parameters $a_i,\cdots,a_r,a_1+1,\cdots,a_{i-1}+1$, and $b$.

As a result, it follows from \cite[Thm.~2]{zbMATH07872144} that, in this case, the coefficients of the type I multiple orthogonal polynomials appearing in \eqref{bidiagonal matrix entries using type I MOP 1}-\eqref{bidiagonal matrix entries using type I MOP 2} are, for $m\in\N$ and $0\leq k\leq r-1$,
\begin{align}
a^{[i]}_{rm+k,k+1}(m)=
\frac{\Gamma\left(a_{k+i}+b+(r+1)m+k\right)\prod\limits_{j=1}^{k}\left(a_{j+i}+b+rm+k\right)_{m+1} \prod\limits_{j=k+1}^{r}\left(a_{j+i}+b+rm+k\right)_{m}} {m!\,\Gamma\left(a_{k+i}+m\right)\Gamma(b+rm+k)\prod\limits_{j=1}^{k-1}\left(a_{k+i+1}-a_{j+i}\right)_{m+1} \prod\limits_{q=k+1}^{r}\left(a_{k+i+1}-a_{j+i}+1\right)_{m}},
\end{align}
where $a_{rn+j}=a_j+n$ for all $0\leq j\leq r$ and $n\in\N$.

Inputting these coefficients in \eqref{bidiagonal matrix entries using type I MOP 1}-\eqref{bidiagonal matrix entries using type I MOP 2}, we derive the formulas
\begin{align}
\label{JPBidiagonal-i=0}
\alpha_{(r+1)(rm+k)}
=\frac{(a_{k+1}+m)\prod\limits_{j=1}^{r}\left(a_j+b+rm+k\right)} {\prod\limits_{j=1}^{r+1}\left(a_{k+j}+b+(r+1)m+k\right)},
\end{align}
and
\begin{align}
\label{JPBidiagonal-i>=1}
\alpha_{(r+1)(rm+k)+i}
=\frac{(a_{k+i+1}-a_{i}+m)(b+rm+k+1)\prod\limits_{j=1}^{r-1}\left(a_{i+j}+b+rm+k\right)} {\prod\limits_{j=1}^{r+1}\left(a_{k+i+j}+b+(r+1)m+k\right)},
\end{align}
for the nontrivial entries of the bidiagonal matrices appearing in the factorisation $\mathrm{H}=\mathrm{L}_1\cdots\mathrm{L}_r\,\mathrm{U}$ of the Hessenberg recurrence matrix of the Jacobi-Pi\~neiro polynomials on the step-line.
It is a straightforward computation to check that \eqref{JPBidiagonal-i=0}-\eqref{JPBidiagonal-i>=1} is equivalent to \eqref{bidiagonal matrix entries Jacobi-Pineiro via BCF}.

When $r=2$, the coefficients in \eqref{bidiagonal matrix entries Jacobi-Pineiro via BCF} and \eqref{JPBidiagonal-i=0}-\eqref{JPBidiagonal-i>=1} coincide with the ones found in \cite[\S~8.1]{GeneticSumsAptekarevEtAl}.

It is a direct consequence of \eqref{bidiagonal matrix entries Jacobi-Pineiro via BCF} or \eqref{JPBidiagonal-i=0}-\eqref{JPBidiagonal-i>=1} that $\dis\alpha_n\to\frac{r^r}{(r+1)^{r+1}}$ as $n\to\infty$.
Therefore, recalling \eqref{recurrence coefficients as a combination of BCF coefficients}, the coefficients of the recurrence relation \eqref{recurrence relation r-OP} satisfied by the type II Jacobi-Pi\~neiro polynomials on the step-line have the same limit as in \cite[Eq.~8.17]{MOPassociatedwithBCFforRatiosOfHypergeometricSeries}:
$\dis\gamma_n^{[k]}\to\binom{r+1}{k+1}\left(\frac{r^r}{(r+1)^{r+1}}\right)^{k+1}$ as $n\to\infty$.

\subsection{Multiple Laguerre of first kind as a limiting case}
\hspace*{\fill}

For a positive integer $r$ and parameters $a_1,\cdots,a_r>0$ such that $a_i-a_j\not\in\Z$ whenever $i\neq j$, we consider the measures $\left(\nu_1,\cdots,\nu_r\right)$ supported on the interval $(0,+\infty)$ with densities $\mathrm{d}\nu_j(x)=\e^{-x}\,x^{a_j-1}\mathrm{d}x$ for all $1\leq j\leq r$, whose moments are
\begin{align}
\label{Laguerre measures moments}
\int_{0}^{+\infty}x^n\mathrm{d}\nu_j(x)
=\int_{0}^{+\infty}\e^{-x}\,x^{a_j+n-1}\,\mathrm{d}x
=\Gamma(a_j+n)
\quad\text{for all }n\in\N\text{ and }1\leq j\leq r.
\end{align}

The limiting relation between the Jacobi and Laguerre measures is 
\begin{align}
\mathrm{d}\nu_j(x)=\lim\limits_{b\to\infty}\mathrm{d}\mu_j\left(\dfrac{x}{b}\right)
\quad\text{for all }1\leq j\leq r,
\end{align} 
where $\mu_1,\cdots,\mu_r$ are the measures determined by \eqref{Jacobi measures moments}.
Therefore, the multiple Laguerre polynomials of the first kind can be obtained as a limiting case of Jacobi-Pi\~neiro polynomials.
In particular, if $\seq[n\in\N]{P_n(x)}$ and $\seq[n\in\N]{\hat{P}_n(x)}$ are the sequences of type II multiple orthogonal polynomials on the step-line with respect to $(\mu_1,\cdots,\mu_r)$ given by \eqref{Jacobi measures moments} and to $(\nu_1,\cdots,\nu_r)$ given by \eqref{Laguerre measures moments}, respectively, then
\begin{equation}
\hat{P}_n(x)=\lim_{b\to\infty}b^n\,P_n\left(\frac{x}{b}\right)
\quad\text{for all }n\in\N.
\end{equation}
As a consequence, the recurrence coefficients $\dis\left(\gamma_n^{[k]}\right)_{n\in\N}^{0\leq k\leq r}$ and  $\dis\left(\hat{\gamma}_n^{[k]}\right)_{n\in\N}^{0\leq k\leq r}$ for the type II Jacobi-Pi\~neiro polynomials and type II multiple Laguerre polynomials of the first kind on the step-line are related by
\begin{equation}
\hat{\gamma}_n^{[k]}=\lim_{b\to\infty}b^{k+1}\,\gamma_n^{[k]}
\quad\text{for all }n\in\N\text{ and }0\leq k\leq r.
\end{equation}
Hence, recalling \eqref{recurrence coefficients as a combination of BCF coefficients}, we obtain the following result.
\begin{theorem}
The Hessenberg recurrence matrix of the multiple Laguerre polynomials of the first kind on the step-line, multiple orthogonal with respect to $(\nu_1,\cdots,\nu_r)$ given by \eqref{Laguerre measures moments}, admits the factorisation $\mathrm{H}=\mathrm{L}_1\cdots\mathrm{L}_r\,\mathrm{U}$, as a product of the bidiagonal matrices in \eqref{bidiagonal matrices U and L_k} with nontrivial entries $\seq[n\in\N]{\hat{\alpha}_n}$ such that
$\hat{\alpha}_n=\lim\limits_{b\to\infty}b\,\alpha_n$ for all $n\in\N$,
where $\seq[n\in\N]{\alpha_n}$ are the nontrivial entries in the bidiagonal decomposition of the recurrence matrix of the Jacobi-Pi\~neiro polynomials given in \eqref{bidiagonal matrix entries Jacobi-Pineiro via BCF} and \eqref{JPBidiagonal-i=0}-\eqref{JPBidiagonal-i>=1}.
Therefore, 
\begin{align}
\label{multiple Laguerre alphas}
\hat{\alpha}_n=a_{j(n)}-a_{i(n)}+\floor{\dfrac{n}{r}}-\floor{\dfrac{n}{r+1}}
\quad\text{for all }n\in\N,
\end{align}
with $a_0=0$, $i(n)=[n]_{r+1}$, and $j(n)=[n]_r+1$, where $[n]_k$ denotes the remainder of the division of $n$ by $k$, 
or, alternatively,
\begin{align}
\label{multiple Laguerre alphas*}
&
\hat{\alpha}_{(r+1)(rm+k)+i}
=\begin{cases}
\hfil a_{k+i+1}-a_i+m & \text{if }k+i\leq r, \\
	a_{k+i+1-r}-a_i+m & \text{if }k+i\geq r+1,
\end{cases}
\end{align}
for all $m\in\N$, $0\leq k\leq r-1$, and $0\leq i\leq r$.
\end{theorem}

It is a direct consequence of either \eqref{multiple Laguerre alphas} or \eqref{multiple Laguerre alphas*} that $\dis\hat{\alpha}_n\sim\frac{n}{r(r+1)}$ as $n\to\infty$.
Therefore, recalling \eqref{recurrence coefficients as a combination of BCF coefficients}, 
the recurrence coefficients $\dis\left(\hat{\gamma}_n^{[k]}\right)_{n\in\N}^{0\leq k\leq r}$ for the type II multiple Laguerre polynomials of the first kind on the step-line satisfy the asymptotic behaviour 
$\dis\hat{\gamma}_n^{[k]}\sim\binom{r+1}{k+1}\left(\frac{n}{r}\right)^{k+1}$ as $n\to\infty$.

Note that, alternatively to taking the limit in \eqref{multiple Laguerre alphas}, we could apply the algorithms developed in Sections \ref{Factorisations using MOP coefficients} and \ref{Factorisations linked to BCF} directly to the multiple Laguerre polynomials of first kind.
For the method from Section \ref{Factorisations using MOP coefficients}, we could input the formulas for the type I multiple Laguerre polynomials of the first kind with respect to an arbitrary number of weights obtained in \cite[Thm.~3]{zbMATH07872144} in \eqref{bidiagonal matrix entries using type I MOP 1}-\eqref{bidiagonal matrix entries using type I MOP 2} to obtain the coefficients $\seq[n\in\N]{\hat{\alpha}_n}$.
However, it is much easier to simply take the limit in \eqref{multiple Laguerre alphas}. 

Using the algorithm from Section \ref{Factorisations linked to BCF}, 
the nontrivial entries $\seq[n\in\N]{\hat{\alpha}_n}$ 
of the bidiagonal decomposition of the recurrence matrix of the multiple Laguerre polynomials of first kind 
are the coefficients of the recurrence relation
\begin{align}
\label{recurrence relation for the g_k multiple Laguerre}
\hat{g}_{k+1}(t)-\hat{g}_k(t)=\hat{\alpha}_k\,t\,\hat{g}_{k+r+1}(t)
\quad\text{for all }k\in\N,
\end{align}
with initial conditions $\hat{g}_0(t)=1$ and
\begin{align}
\label{g_1 to g_r, Laguerre}
\hat{g}_k(t)
=\sum_{n=0}^{\infty}\frac{\Gamma(a_k+n+1)}{\Gamma(a_k+1)}\,t^n
=\sum_{n=0}^{\infty}\pochhammer{a_k+1}\,t^n
=\Hypergeometric[t]{2}{0}{a_k+1,1}{-}
\quad\text{for }1\leq k\leq r.
\end{align}
This relation is satisfied by the sequence $\seq[n\in\N]{\hat{g}_n(t)}$ obtained by taking the limit
$\hat{g}_n(t)=\lim\limits_{b\to\infty}g_n(bt)$ for all $n\in\N$, 
where $\seq[k\in\N]{g_n(t)}$ is the sequence defined by \eqref{seq. g_n of hypergeometric series for J.P. poly}.
The computation of this limit is connected to contour integral representations for type I Jacobi-Pi\~neiro and multiple Laguerre polynomials and will be considered in forthcoming work.

\section*{Acknowledgements}
\textbf{AB} acknowledges  financial support by the Centre for Mathematics of the University of Coimbra (CMUC, https://doi.org/10.54499/UID/00324/2025) under the Portuguese Foundation for Science and Technology (FCT), Grants UID/00324/2025 and UID/PRR/00324/2025.
\textbf{JEFD}, \textbf{AF}, and \textbf{HL} acknowledge Center for Research and Development in Mathematics and Applications (CIDMA) from University of Aveiro funded by the Portuguese Foundation for Science and Technology (FCT) through grants UID/4106/2025 and UID/PRR/4106/2025.
Additionally, \textbf{JEFD} acknowledges PhD contract DOI: 10.54499/UI/BD/152576/2022 from FCT.
\textbf{JEFD} and \textbf{MM} acknowledge research project [PID2021- 122154NB-I00], Ortogonalidad y Aproximaci\'on con Aplicaciones en Machine Learning y Teor\'ia de la Probabilidad funded by MICIU/AEI/ 10.13039/501100011033 and by ``ERDF A Way of making Europe''.

\bibliographystyle{plain}

\end{document}